\newcommand{\defeq}{\mathrel{\mathop:}=}
\newcommand{\F}{\mathcal{F}}
\newcommand{\uarg}{\,\cdot\,}
\newcommand{\ud}{\mathrm{d}}
\newcommand{\R}{\mathbb{R}}
\newcommand{\N}{\mathbb{N}}
\renewcommand{\P}{\mathbb{P}}
\newcommand{\E}{\mathbb{E}}
\newcommand{\B}{\mathcal{B}}
\newcommand{\charfun}[1]{\mathbbm{I}\left\{#1\right\}}
\newcommand{\given}{\,:\,}
\newtheorem{theorem}{Theorem}
\newtheorem{corollary}[theorem]{Corollary}
\newtheorem{proposition}[theorem]{Proposition}
\newtheorem{lemma}[theorem]{Lemma}
\theoremstyle{definition}
\newtheorem{definition}[theorem]{Definition}
\newtheorem{condition}[theorem]{Condition}
\theoremstyle{remark}
\newtheorem{remark}[theorem]{Remark}
\title{Quantitative convergence rates for sub-geometric Markov chains}
\author{Christophe Andrieu}
\address{Christophe Andrieu, School of Mathematics, University of Bristol, BS8 1TW, United
Kingdom}
\email{C.Andrieu@bristol.ac.uk}
\author{Gersende Fort}
\address{Gersende Fort, 
  CNRS \& Telecom ParisTech,
  46 rue Barrault
  75634 Paris Cedex 13
  France}
\email{gersende.fort@telecom-paristech.fr}
\author{Matti Vihola}
\address{Matti Vihola, Department of Mathematics and Statistics, University of
Jyväskylä, P.O.Box 35, FI-40014 Univ.~of Jyväskylä, Finland}
\email{matti.vihola@iki.fi}
\keywords{Markov chain, 
  inhomogeneous,
  polynomial ergodicity,
  sub-geometric ergodicity}
\subjclass[2010]{Primary 
  60J05; 
  secondary 
  60J22, 
  }
\begin{document}

\maketitle

\begin{abstract} 
We provide explicit expressions for the constants involved in the 
characterisation of ergodicity of sub-geometric Markov chains. The
constants are determined in terms of those appearing in the assumed 
drift and one-step minorisation conditions. The result is fundamental
for the study of some algorithms where uniform bounds for these
constants are needed for a family of Markov kernels. Our result
accommodates also some classes of inhomogeneous chains.
\end{abstract} 

\section{Introduction}
\label{sec:intro} 

Quantitative convergence rates of Markov chains have been extensively
studied in the geometric ergodicity scenario; see, for example,
\cite{baxendale} and \cite{douc-moulines-rosenthal} and references
therein for homogeneous and inhomogeneous Markov chains, respectively.
Such results have proved to be very useful in certain applications,
such as the analysis of adaptive Markov chain Monte Carlo (MCMC) or
stochastic approximation (SA) recursions
\cite[e.g.][]{andrieu-thoms,fort-moulines-priouret,andrieu-moulines-priouret},
where quantifying the convergence rates of a family of Markov kernels
$\{P_\theta\}_{\theta\in\Theta}$ in terms of $\theta\in\Theta$ is
required. In some cases, delicate control of the constants can also
be used to deduce the stability of the algorithms
\cite[e.g.][]{saksman-vihola,andrieu-vihola}. 

In the present work, we establish explicit bounds on the rate of
convergence of sub-geometric Markov chains in terms of the constants
involved in standard drift and minorisation conditions. As in
the geometric context, such results are important for adaptive MCMC
and SA with sub-geometric kernels \cite[e.g.][]{atchade-fort}. 
In section \ref{sec:applications} we discuss in more details two specific applications prompted by two other recent
methodological and theoretical developments in the area of MCMC \cite{maire-douc-olsson,andrieu-vihola-pseudo}.

We now provide a brief discussion of existing results and how they
relate with our work. Hereafter, we shall use the following standard
notation whenever well-defined:
\begin{align*}
    P f(x) &\defeq \textstyle \int P(x,\ud y) f(y),
    & \mu(f)&\defeq \textstyle\int \mu(\ud x) f(x), \\
    P Q(x,A) & \defeq \textstyle \int P(x,\ud y) Q(y,A), 
    &\mu P(A) &\defeq \textstyle \int \mu(\ud x) P(x,A),
\end{align*}
where $P$ and $Q$ are Markov kernels on a measurable space
$(\mathsf{X},\B(\mathsf{X}))$, $f:\mathsf{X}\to\R$ is a measurable
function and $\mu$ is a (signed) measure. 

In the literature, the Markov chain
`convergence rate' often refers to the rate of convergence of marginal
distributions, that is, if $\pi$ is the invariant measure of $P$,
\begin{equation}
    \hat{r}(n) | P^n f(x) - \pi(f) | \le c V(x) \qquad \text{for all $n\in
      \N$ and
    $x\in\mathsf{X}$},
  \label{eq:marginal-rate}
\end{equation}
where $\big(\hat{r}(n)\big)_{n\ge 0}$ is a positive non-decreasing rate
sequence, $f$ belongs to a suitable class of functions
integrable respect to $\pi$, the function 
$V:\mathsf{X}\to[1,\infty)$ is measurable and $c$ is a finite constant
which is often left unspecified. We focus here instead on establishing
the stronger property
\begin{equation} 
    \sum_{n=0}^\infty r(n) |P^n f(x) - \pi(f) | \le c
    V(x)\qquad\text{for all $x\in\mathsf{X}$,}
    \label{eq:sum-rate}
\end{equation}
and aim to quantify the constant $c$ in terms of the constants in
Condition \ref{cond:drift-for-coupling}. 
The rate $\big(r(n)\big)_{n\ge 0}$ is positive non-decreasing as
$\big(\hat{r}(n)\big)_{n\ge 0}$, and if $r(n)=\hat{r}(n)$,
\eqref{eq:sum-rate} clearly implies \eqref{eq:marginal-rate}.
While the distinction between
\eqref{eq:marginal-rate} and \eqref{eq:sum-rate} is often not
essential in the geometric case, it turns out to be important in some
sub-geometric scenarios. Indeed, for some applications, using the
marginal convergence rate \eqref{eq:marginal-rate} to deduce a
property of the type \eqref{eq:sum-rate} may be sub-optimal for
sub-geometric Markov chains; an example is briefly discussed below.

The characterisation of sub-geometric Markov chains with drift and 
minorisation conditions has been considered in various earlier works
starting with the pioneering work of Tuominen and Tweedie
\cite{tuominen-tweedie}. In the more recent works Fort and Moulines
\cite{fort-moulines-polynomial} and Jarner and Roberts
\cite{jarner-roberts} establish polynomial rates of convergence,
but do not provide quantitative results. Douc, Fort, Moulines and
Soulier \cite{douc-fort-moulines-soulier} (see also \cite{fort-phd})
have extended these results to more general sub-geometric ergodicity
scenarios. The latter works consider quantities of the type
\eqref{eq:sum-rate}, but do not provide a quantitative expression for
the constant $c$.  

Douc, Moulines and Soulier \cite{douc-moulines-soulier} have later
provided rates of convergence for sub-geometric chains with computable
constants, but their approach is restricted to the convergence of the
marginals \eqref{eq:marginal-rate} and no result is available
concerning \eqref{eq:sum-rate}. Although bounds of the form
\eqref{eq:marginal-rate} may imply \eqref{eq:sum-rate} in some
scenarios, such an approach may be sub-optimal and lead to a
significant loss. This is the case, for example, with certain
polynomial kernels yielding \eqref{eq:marginal-rate} with rate 
$\hat{r}(n)\propto n^\beta$ with some $\beta>0$ 
\cite{jarner-roberts}. This guarantees the finiteness of
the sum in \eqref{eq:sum-rate} with a constant rate $r(n)=1$
only if $\beta>1$, 
whereas our results imply \eqref{eq:sum-rate} 
also with weaker polynomial rates including the cases $\beta\in(0,1]$ 
of \cite{jarner-roberts}.

Our main result, Theorem \ref{thm:generic-explicit-rates}
in Section \ref{sec:main-result}, provides an explicit 
upper bound for the constant $c$ 
for a slight generalisation of \eqref{eq:sum-rate}. The
approach follows that of Andrieu and Fort
\cite{andrieu-fort-explicit}, but we complement it by providing
explicit and relatively simple expressions, valid under a slightly
stronger but more easily applicable one-step minorisation condition.
In Section \ref{sec:poly} we then establish a set of corollaries of
Theorem \ref{thm:generic-explicit-rates} for the important special
case of polynomially ergodic chains, and continue with discussion on
two specific applications in Section \ref{sec:applications}.  The
proof of Theorem \ref{thm:generic-explicit-rates} is given in Section
\ref{sec:proof}, after describing the notation and definitions in
Section \ref{sec:coupling}.  Our proof is nearly self-contained, using
only two auxiliary results which are restated in Appendix
\ref{sec:literature} for the reader's convenience.


\section{Explicit rate of convergence for sub-geometric Markov chains} 
\label{sec:main-result} 

We start by the generic main assumption, a sub-geometric drift 
condition towards a small set, and recall the definition of Young
functions.

\begin{condition} 
    \label{cond:drift-for-coupling} 
Suppose $(P_k)_{k\ge 1}$ is a collection of Markov kernels on a measurable space 
$(\mathsf{X},\B(\mathsf{X}))$. Assume there exist a set
$C\in\B(\mathsf{X})$, a measurable function $V:\mathsf{X}\to[1,\infty)$ 
and a concave, non-decreasing and differentiable function
$\phi:[1,\infty)\to(0,\infty)$ such that 
$\lim_{t\rightarrow\infty}\phi'(t)=0$. 
Moreover, there exist probability measures
$(\nu_k)_{k\ge 1}$ on $(\mathsf{X},\B(\mathsf{X}))$ 
and constants $\epsilon_\nu,\epsilon_b \in(0,1)$, 
$b_V,c_V<\infty$ such that for all $k\ge 1$,
\begin{align*}
P_k V(x) &\le V(x)-\phi\circ V(x)+b_V\charfun{x\in C}  \\
P_k(x,\uarg) & \ge \epsilon_\nu \nu_k(\uarg) \qquad \text{for all }x\in C \\
\inf_{x\notin C} \phi \circ V(x) &\ge
b_V(1-\epsilon_b)^{-1} \qquad\text{and}\qquad
\sup_{x\in C} V(x) \le c_V.
\end{align*}
\end{condition} 

\begin{definition} 
    \label{def:young} 
The non-decreasing functions $\Psi_{1},\Psi_{2}:[1,\infty)\to(0,\infty)$ are (a pair
of) Young functions if $\Psi_{1}(x)\Psi_{2}(y)\leq x+y$
for all $x,y\ge 1$.
\end{definition} 

Theorem \ref{thm:generic-explicit-rates} 
when applied with $P_k = P$ is a refinement of Proposition 3.1 and 
Theorem 3.6 in \cite{andrieu-fort-explicit} since
it provides an explicit expression of the upper bound.

\begin{theorem}
\label{thm:generic-explicit-rates} 
Assume Condition \ref{cond:drift-for-coupling}.
Then there exists
a constant $c\in[0,\infty)$ dependent on $b_V$, $c_V$, 
$\epsilon_b$, $\epsilon_\nu$ and $\phi$ only,
such that for any pair of Young functions $\Psi_1$, $\Psi_2$
and any measurable $f:\mathsf{X}\to\R$ satisfying
$\|f\|_{W} \defeq \sup_{x\in\mathsf{X}} |f(x)|/W(x)<\infty$ with
$W(x) \defeq
\Psi_{2}\big(\phi\circ V(x)/\phi(1)\big)$,
\begin{equation}
\sum_{n\geq0}\Psi_{1}\big(r(
n)\big)|P^{(n)}f(x)-P^{(n)}f(x')|\leq
c(V(x)+V(x')-1) \|f\|_{W},
\label{eq:coupling-bound}
\end{equation}
where $P^{(n)} \defeq P_1\cdots P_n $, with the convention
$P^{(0)}(x,A) \defeq \charfun{x\in A}$, the indicator function, and
where $r:\N\to[1,\infty)$ is defined through
$H_\phi:[1,\infty)\to[0,\infty)$ by 
\begin{equation}
    H_\phi(t)\defeq \int_{1}^{t}\frac{\ud s}{\phi(s)}
    \qquad\text{and}\qquad
    r(n) \defeq \frac{\phi \circ H_\phi^{-1}(\epsilon_b n)}{\phi(1)}.
    \label{eq:def-r-n}
\end{equation}

The constant $c$ can be given as
\begin{align*}
    c &\defeq \frac{2}{\epsilon_b\phi(1)}\bigg[ 
    2 + \frac{\bar{b}}{\epsilon_\nu}
    + c_* \bar{b}
      r(1)\bigg(1 + \frac{r(1)}{\epsilon_b \phi(1)}
    \bigg)\bigg],
\end{align*}
where
\begin{align*}
    \bar{b} &\defeq 2b_V + \epsilon_b\phi(1),
    & c_* &\defeq \sum_{j=1}^\infty
    (1-\epsilon_\nu)^{j-1}\prod_{k=1}^{j-1}
    \big(1+\delta_kM_1\big), \\
    \delta_k &\defeq \epsilon_b(\phi'\circ H_\phi^{-1})(\epsilon_b k),
    & M_1 &\defeq r(1)\bigg[1 + \frac{2r(1)}{\epsilon_b \phi(1)} 
    \bigg(\frac{b_V+c_V}{1-\epsilon_\nu} -1\bigg)\bigg].
\end{align*}
\end{theorem}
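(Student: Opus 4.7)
The plan is to follow the coupling-based strategy of Andrieu and Fort \cite{andrieu-fort-explicit} while making every constant explicit. The starting point is a split-chain coupling $(X_n, X'_n, d_n)_{n\ge 0}$ on $\mathsf{X}^2 \times \{0,1\}$, where $d_n = 1$ encodes that the two chains have coalesced. Whenever $(X_n,X'_n) \in C \times C$, I couple with probability $\epsilon_\nu$ by drawing both coordinates from the common minorising measure $\nu_{n+1}$, and otherwise use the residual kernels; outside $C \times C$ the two coordinates evolve independently under $P_{n+1}$. Letting $\tau$ denote the coupling time and $D_n = \charfun{n<\tau}$, the standard coupling inequality bounds $|P^{(n)}f(x) - P^{(n)}f(x')|$ by $\|f\|_W\,\E[(W(X_n)+W(X'_n))D_n]$, and the Young inequality $\Psi_1(r(n))\Psi_2(\phi\circ V(y)/\phi(1))\le r(n)+\phi\circ V(y)/\phi(1)$ reduces matters to bounding, for each starting point $y\in\{x,x'\}$,
\begin{equation*}
A(y) \defeq \E_y\Big[\sum_{n=0}^{\tau-1} \phi\circ V(X_n)\Big]
\qquad\text{and}\qquad
B(y) \defeq \E_y\Big[\sum_{n=0}^{\tau-1} r(n)\Big].
\end{equation*}

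The quantity $A(y)$ is handled by telescoping the drift inequality $P_k V \le V - \phi\circ V + b_V\charfun{x\in C}$ along the stopped process. Since the drift forces $\phi\circ V \ge b_V/(1-\epsilon_b)$ on $C^c$, the contributions of visits to $C$ are absorbed into a bound of the form $V(y)+(\text{const})$, producing the constant $\bar b = 2b_V + \epsilon_b \phi(1)$ together with the overall prefactor $2/(\epsilon_b\phi(1))$.

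For $B(y)$, I decompose the trajectory into successive excursions between returns to $C\times C$. The number $N$ of such excursions before coupling is stochastically dominated by a geometric random variable with success probability $\epsilon_\nu$, so $\P(N \ge j) \le (1-\epsilon_\nu)^{j-1}$. On a single excursion, the drift controls the $r$-moment of the return time, giving $M_1$ via the identity $r(1) = \phi\circ H_\phi^{-1}(\epsilon_b)/\phi(1)$ and a drift-based bound on $V$ at the first return to $C$ in terms of $(b_V+c_V)/(1-\epsilon_\nu)$. To combine excursions one uses a concavity-based estimate of the form $r(n+m) \le r(n) + \delta_n\cdot(r\text{-moment of the next excursion})$, obtained by expanding $\phi\circ H_\phi^{-1}(\epsilon_b(n+m))$ about $\epsilon_b n$ and using $(\phi\circ H_\phi^{-1})'=\phi'\circ H_\phi^{-1}\cdot\phi\circ H_\phi^{-1}$ together with the concavity of $\phi$. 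Iterating across $j$ excursions produces the multiplicative factor $\prod_{k=1}^{j-1}(1+\delta_k M_1)$, and summing against the geometric tail yields $c_*$.

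The main obstacle is the book-keeping in this last step. In the geometric case $\phi(t)=\lambda t$ the quantities $\delta_k$ are constant and the series is easily tamed; in the sub-geometric regime convergence of $c_*$ relies on $\delta_k\to 0$, which is where the hypothesis $\phi'(t)\to 0$ is used. Keeping $\bar b$, $M_1$ and $c_*$ in exactly the stated form, rather than up to unspecified constants, requires transporting the drift and minorisation inputs through every coupling step without slack, and is where Condition \ref{cond:drift-for-coupling}—in particular the one-step minorisation with common $\epsilon_\nu$ and the uniform bound $\sup_C V \le c_V$—is used in full strength.
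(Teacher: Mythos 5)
Your proposal follows essentially the same route as the paper: the same split-chain coupling on $\mathsf{X}^2\times\{0,1\}$ with coupling probability $\epsilon_\nu$ on $C\times C$, the same Young-inequality reduction to the two sums $\sum_{n<\tau} r(n)$ and $\sum_{n<\tau}\phi\circ V$, the same excursion decomposition with a geometrically distributed number of coupling attempts, and the same sub-multiplicativity and increment estimates for $r$ producing $\bar b$, $M_1$ and $c_*$. The one step you leave implicit---that the drift condition controls the $r$-weighted moment of the return time to $C\times C$---is handled in the paper exactly as you would expect, via the adapted drift-function sequence of Douc, Fort, Moulines and Soulier together with the comparison theorem, so there is no substantive divergence.
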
 

The proof of Theorem \ref{thm:generic-explicit-rates} is postponed to
Section \ref{sec:proof}. 

\begin{remark} 
In Theorem \ref{thm:generic-explicit-rates},
\begin{enumerate}[(i)]
\item it is easy to see that the assumptions imply 
$\lim_{t\to\infty}H_\phi^{-1}(t)=\infty$ so
$\lim_{k\to\infty} \delta_k = 0$ and therefore $c_*<\infty$.
\item in the case of a constant drift, that is, 
  if the function
  $\phi \equiv \epsilon_\phi>0$, then we have  $c_* = \epsilon_\nu^{-1}$.
\item the condition $\inf_{x\notin C} \phi\circ V(x) \ge b_V/(1-\epsilon_b)$
  is essential for our proof; we need the bivariate drift established in
  Lemma \ref{lem:joint-drifts}. If $\lim_{t\to\infty}\phi(t)=\infty$, 
  it is often possible to check Condition
  \ref{cond:drift-for-coupling}; see Corollary 
  \ref{cor:practical-interpolated-drifts} for the polynomial case.
\item \label{item:generic-initial-measures} 
  if $\mu_1$ and $\mu_2$ be probability 
  measures such that $\mu_1(V)+\mu_2(V)<\infty$, then
  \eqref{eq:coupling-bound} implies the following bound,
  \begin{equation*}
    \sum_{n\geq0}\Psi_{1}\big(r(n)\big)
    |\mu_1 (P^{(n)}f)-\mu_2 (P^{(n)} f)|\leq
    c(\mu_1(V)+\mu_2(V)-1) \|f\|_W,
  \end{equation*}
  because for any function $g$ integrable with respect to 
  $\mu_1$ and $\mu_2$, we have 
  $| \mu_1(g)-\mu_2(g)|\le \int \mu_1(\ud x) \mu_2(\ud x')
  |g(x)-g(x')|$.
\item 
  suppose that $\pi$ is the invariant probability measure of
  $P_k$ for $k\ge 1$ and $\pi(V)<\infty$, then 
  \eqref{item:generic-initial-measures} with
  $\mu_1 = \charfun{x\in \cdot}$ and $\mu_2=\pi$
  yields
\[
\sum_{n\geq0}\Psi_{1}\big(r(n)\big)|P^{(n)}f(x)-\pi(f)|\leq
c(V(x)+\pi(V)-1)\|f\|_W.
\]
\item it is possible to refine the bound by
  replacing the term $c(V(x)+V(x')-1)$ with $c_1(V(x)+V(x'))+c_2$,
  where the constants $c_1$ and $c_2$ are easily accessible
  from the statements of Lemmas \ref{lem:phi-v} and 
  \ref{lem:r-phi-sum}.
\end{enumerate}
\end{remark} 


\section{Rate of convergence for polynomially ergodic chains} 
\label{sec:poly} 

We state here two convenient corollaries of Theorem 
\ref{thm:generic-explicit-rates} in the case where
$P$ satisfies a polynomial drift condition.
The first corollary characterises the required balance between the class of functions
and the rate of convergence.

\begin{corollary} 
    \label{cor:poly} 
Assume Condition \ref{cond:drift-for-coupling} holds with
$\phi(v) = \beta v^{\alpha}$ with some constants 
$\beta>0$, $\alpha\in[0,1)$ and $\epsilon_b\in(0,1)$. 
Then, for any $\xi\in[0,1]$ and for any measurable
function $f:\mathsf{X}\to\R$ with 
\[
    \|f\|_{V^{\alpha(1-\xi)}} \defeq \sup_{x\in\mathsf{X}}
\frac{|f(x)|}{V^{\alpha(1-\xi)}(x)}<\infty,
\]
there exists a constant $c_{\alpha,\beta,\epsilon_b,\xi}<\infty$ depending on
$\alpha$, $\beta$, $\epsilon_b$ and $\xi$ such that
\begin{equation}
    \sum_{n\ge 0} (n+1)^{\frac{\xi\alpha}{1-\alpha}}
    | P^{(n)} f(x) - P^{(n)} f(x') | \le c_{\alpha,\beta,\epsilon_b,\xi} c
    \|f\|_{V^{\alpha(1-\xi)}}(V(x)+V(x')-1),
    \label{eq:polynomial-bound}
\end{equation}
where $c<\infty$ is the constant 
given in Theorem \ref{thm:generic-explicit-rates}.
\end{corollary}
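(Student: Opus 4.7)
The plan is to apply Theorem \ref{thm:generic-explicit-rates} with the explicit choice $\Psi_1(t) \defeq t^\xi$ and $\Psi_2(t) \defeq t^{1-\xi}$. First I would verify that this pair satisfies Definition \ref{def:young}: both functions map $[1,\infty)$ to $[1,\infty)\subset(0,\infty)$ and are non-decreasing for $\xi\in[0,1]$, and the product inequality
\[
\Psi_1(x)\Psi_2(y)=x^\xi y^{1-\xi}\le \xi x+(1-\xi) y\le x+y\qquad\text{for all }x,y\ge 1
\]
is the weighted arithmetic--geometric mean inequality. In particular the weight appearing in the theorem becomes $W(x)=\Psi_2\bigl(\phi\circ V(x)/\phi(1)\bigr)=V(x)^{\alpha(1-\xi)}$, which matches the class of test functions in the corollary.

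Next I would compute the rate $r(n)$ explicitly for $\phi(v)=\beta v^\alpha$ with $\alpha\in(0,1)$. A direct calculation gives
\[
H_\phi(t)=\int_1^t\frac{\ud s}{\beta s^\alpha}=\frac{t^{1-\alpha}-1}{\beta(1-\alpha)},\qquad H_\phi^{-1}(u)=\bigl(1+\beta(1-\alpha)u\bigr)^{1/(1-\alpha)},
\]
and therefore $r(n)=\bigl(1+\beta(1-\alpha)\epsilon_b n\bigr)^{\alpha/(1-\alpha)}$. The boundary case $\alpha=0$ is trivial since then $\phi\equiv\beta$, $r(n)\equiv 1$, and the exponent $\xi\alpha/(1-\alpha)$ vanishes, so the sought bound reduces immediately to that of Theorem \ref{thm:generic-explicit-rates}.

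Plugging these choices into Theorem \ref{thm:generic-explicit-rates} would then yield
\[
\sum_{n\ge 0} r(n)^\xi\, |P^{(n)} f(x)-P^{(n)} f(x')|\le c\,\|f\|_{V^{\alpha(1-\xi)}}\,(V(x)+V(x')-1).
\]
To obtain the cleaner rate $(n+1)^{\xi\alpha/(1-\alpha)}$ stated in the corollary, I would use the elementary estimate $1+\beta(1-\alpha)\epsilon_b n\ge \min\{1,\beta(1-\alpha)\epsilon_b\}(n+1)$, valid for all $n\ge 0$, which gives
\[
(n+1)^{\xi\alpha/(1-\alpha)}\le \min\{1,\beta(1-\alpha)\epsilon_b\}^{-\xi\alpha/(1-\alpha)}\, r(n)^\xi,
\]
and define $c_{\alpha,\beta,\epsilon_b,\xi}$ to be the displayed prefactor.

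I do not anticipate a substantive obstacle: the corollary is essentially a specialisation of Theorem \ref{thm:generic-explicit-rates} to power-type drifts combined with the canonical Young pair $(t^\xi,t^{1-\xi})$. The only care needed is in verifying the Young product inequality at the endpoints $\xi\in\{0,1\}$ (where one of the factors collapses to $1$) and in the polynomial comparison between $r(n)$ and $n+1$, both of which are elementary.
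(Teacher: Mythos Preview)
Your proposal is correct and follows essentially the same route as the paper: compute $H_\phi$, $H_\phi^{-1}$ and $r(n)$ explicitly for $\phi(v)=\beta v^\alpha$, choose the power-type Young pair, apply Theorem~\ref{thm:generic-explicit-rates}, and then compare $r(n)$ with $(n+1)^{\alpha/(1-\alpha)}$ via the same elementary estimate $1+\beta(1-\alpha)\epsilon_b n \ge \min\{1,\beta(1-\alpha)\epsilon_b\}(n+1)$. The only cosmetic difference is that the paper takes $\Psi_1(x)=\xi^{-1}x^\xi$, $\Psi_2(y)=(1-\xi)^{-1}y^{1-\xi}$ with separate definitions at the endpoints $\xi\in\{0,1\}$, whereas your simpler choice $(\Psi_1,\Psi_2)=(t^\xi,t^{1-\xi})$ handles all $\xi\in[0,1]$ uniformly and makes the Young inequality check via weighted AM--GM slightly cleaner; the resulting constants differ but this is immaterial for the statement.
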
 
\begin{proof} 
We may compute
\[
    H_\phi(t) = \int_1^t \frac{\ud s}{\beta t^{\alpha}}
    = \frac{t^{1-\alpha} - 1}{\beta(1-\alpha)} 
    \quad\text{and}\quad
    H_\phi^{-1}(n) = (n\beta(1-\alpha)+1)^{\frac{1}{1-\alpha}},
\]
so we obtain
\[
    r(n) =
    \big(\epsilon_b n\beta(1-\alpha)+1)^{\frac{\alpha}{1-\alpha}}
    \ge (n+1)^{\frac{\alpha}{1-\alpha}}c_{\alpha,\beta,\epsilon_b},
\]
where $c_{\alpha,\beta,\epsilon_b} \defeq
    \min\{1,(\epsilon_b\beta(1-\alpha))^{\frac{\alpha}{1-\alpha}}\}.$
Define the functions
\begin{equation}
    \Psi_1(x) \defeq \begin{cases}\xi^{-1}x^{\xi},&\xi\in(0,1)\\
    x,&\xi=1 \\
    1,&\xi=0
    \end{cases}
    \qquad
    \Psi_2(y) \defeq \begin{cases}(1-\xi)^{-1} y^{1-\xi},&\xi\in(0,1)\\
    1,&\xi=1 \\
    y,&\xi=0,
    \end{cases}
    \label{eq:def-psi-polynomial}
\end{equation}
satisfying $\Psi_1(x)\Psi_2(y)\le x+y$, by Young's inequality for
$\xi\in(0,1)$. 

Theorem \ref{thm:generic-explicit-rates} implies that 
\[
    \sum_{n\ge 0}
    \Psi_1\big(c_{\alpha,\beta,\epsilon_b}(n+1)^{\frac{\alpha}{1-\alpha}}\big)
    | P^{(n)} f(x) - P^{(n)} f(x') | \le c
    \|f\|_{\Psi_2(\beta V^{\alpha})}(V(x)+V(x')-1),
\]
from which we deduce the claim with
\[
    c_{\alpha,\beta,\epsilon_b,\xi} = c_{\alpha,\beta,\epsilon_b}^{-\xi} 
    \big[
    (1-\xi)\xi\beta^{\xi-1}
    + \charfun{\xi=1} + \charfun{\xi=0}\beta^{-1}\big].
    \qedhere
\]
\end{proof} 

We further consider a corollary which allows one to consider different 
growth rates of the upper bound in \eqref{eq:polynomial-bound} in terms
of $x$ and $x'$.

\begin{condition}
    \label{cond:uniform-polynomial-drift-and-minorisation} 
Suppose $\mathcal{P}$ is a collection of Markov kernels on  
$(\mathsf{X},\B(\mathsf{X}))$. Assume there exist a set
$C\in\B(\mathsf{X})$ and a measurable function $\hat{V}:\mathsf{X}\to[1,\infty)$ 
with $c_{\hat{V}}\defeq \sup_C \hat{V}<\infty$ and constants $\beta>0$, $\alpha\in (0,1)$ and
$b_{\hat{V}}<\infty$ such that for all $P\in\mathcal{P}$
\begin{align*}
    P\hat{V}(x) &\le \begin{cases} 
    \hat{V}(x) - \beta \hat{V}^\alpha(x), & x\notin C, \\ 
    b_{\hat{V}}, & x\in C.
    \end{cases}
\end{align*}
Furthermore, suppose that every level set $A_{\hat{V}}(v)\defeq
\{x\in\mathsf{X}: \hat{V}(x)\le v\}$ is uniformly 1-small, that is,
there exist $\epsilon_v>0$ and probability measures
$(\nu_P)_{P\in\mathcal{P}}$ on
$\big(\mathsf{X},\B(\mathsf{X})\big)$ such that for all $P\in\mathcal{P}$
\[
    P(x,\uarg) \ge \epsilon_v \nu_P(\uarg) \qquad\text{for all
      $x\in A_{\hat{V}}(v)$.}
\]
\end{condition} 

We first observe that Condition 
\ref{cond:uniform-polynomial-drift-and-minorisation} implies
Condition \ref{cond:drift-for-coupling} for functions $V=\hat{V}^\eta$
with any $\eta\in(1-\alpha,1]$.

\begin{proposition} 
    \label{prop:drift-with-moments} 
Suppose Condition \ref{cond:uniform-polynomial-drift-and-minorisation}
holds. Then, for any $(P_k)_{k\ge 1}\subset \mathcal{P}$ and $\lambda\in [0,1)$, 
Condition \ref{cond:drift-for-coupling} holds with $V(x) =
\hat{V}(x)^{1-\lambda\alpha}$,
$\phi(v) = (1-\lambda\alpha) \beta v^{\alpha_\lambda}$
where $\alpha_\lambda \defeq \frac{\alpha(1-\lambda)}{1-\lambda \alpha}$,
with the set $C \defeq A_{\hat{V}}(c_V)$, and
with some constants $\epsilon_b,\epsilon_\nu\in(0,1)$ and
$b_V, c_V<\infty$, whose values depend only on $\lambda$ and
the constants and the function $\hat{V}$ in
Condition \ref{cond:uniform-polynomial-drift-and-minorisation}.
\end{proposition}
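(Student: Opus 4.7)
The plan is to verify each clause of Condition \ref{cond:drift-for-coupling} using
$V \defeq \hat V^{\eta}$ and $\phi(v)\defeq\eta\beta v^{\alpha_\lambda}$ with $\eta\defeq 1-\lambda\alpha\in(0,1]$, and with the set $C\defeq A_{\hat V}(c_V)$ for a threshold $c_V\ge c_{\hat V}$ to be fixed at the end. The elementary checks $\alpha_\lambda\in(0,1)$ and $\lim_{t\to\infty}\phi'(t)=0$ guarantee that $\phi$ has all the regularity required by Condition \ref{cond:drift-for-coupling}.

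The single analytic input is the tangent-line form of concavity of $t\mapsto t^{\eta}$:
\[
    t^{\eta}\le s^{\eta}+\eta s^{\eta-1}(t-s),\qquad s,t>0.
\]
Integrating against $P_k(x,\cdot)$ with $s=\hat V(x)$ yields
\[
    P_k V(x)\le V(x)+\eta\hat V(x)^{\eta-1}\bigl(P_k\hat V(x)-\hat V(x)\bigr).
\]
For $x\notin A_{\hat V}(c_{\hat V})$, substituting $P_k\hat V-\hat V\le-\beta\hat V^{\alpha}$ and using the exponent identity $\eta+\alpha-1=\alpha(1-\lambda)=\eta\alpha_\lambda$ collapses the right-hand side to $V-\eta\beta V^{\alpha_\lambda}=V-\phi\circ V$. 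For $x\in A_{\hat V}(c_{\hat V})$, the bounds $P_k\hat V-\hat V\le b_{\hat V}-1$ together with $\hat V^{\eta-1}\le 1$ give the cruder estimate $P_k V\le V+\eta(b_{\hat V}-1)$.

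Taking $C\defeq A_{\hat V}(c_V)$ with $c_V\ge c_{\hat V}$, the two bounds combine into
\[
    P_k V(x)\le V(x)-\phi\circ V(x)+b_V\,\mathbbm{1}_C(x),\qquad b_V\defeq \eta(b_{\hat V}-1)+\phi(c_{\hat V}^{\eta}),
\]
and crucially $b_V$ depends only on $\lambda$ and on the constants of Condition \ref{cond:uniform-polynomial-drift-and-minorisation}, not on $c_V$. The minorisation on $C$ is immediate from uniform $1$-smallness of $A_{\hat V}(c_V)$ (taking $\epsilon_\nu\defeq\epsilon_{c_V}$), and $\sup_C V\le c_V^{\eta}$ is a restatement of the definition.

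The only delicate clause is $\inf_{x\notin C}\phi\circ V\ge b_V/(1-\epsilon_b)$; the conceptual point that makes everything work is that on the annulus $A_{\hat V}(c_V)\setminus A_{\hat V}(c_{\hat V})$ the drift $P_k V\le V-\phi\circ V$ already holds, so the excess $b_V$ term is localised to $A_{\hat V}(c_{\hat V})$ and does \emph{not} scale with $c_V$. Consequently $\inf_{x\notin C}\phi\circ V=\phi(c_V^{\eta})=\eta\beta c_V^{\alpha(1-\lambda)}$ can be driven to infinity while $b_V$ remains fixed; fixing for instance $\epsilon_b\defeq 1/2$ and choosing $c_V$ so large that $\eta\beta c_V^{\alpha(1-\lambda)}\ge 2 b_V$ completes the verification.
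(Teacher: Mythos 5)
Your proof is correct and follows essentially the same route as the paper: the concavity (tangent-line/Jensen) estimate for $t\mapsto t^{\eta}$ with $\eta=1-\lambda\alpha$, as in Jarner--Roberts, yields the drift $P_kV\le V-\phi\circ V+b_V\charfun{x\in C}$ with $b_V$ independent of $c_V$, and then $c_V$ is chosen large enough that $\phi(c_V^{\eta})\ge b_V(1-\epsilon_b)^{-1}$. The only differences are cosmetic (your $b_V=\eta(b_{\hat V}-1)+\phi(c_{\hat V}^{\eta})$ versus the paper's $b_{\hat V}^{\eta}+\phi(c_{\hat V})$, and your slightly more explicit treatment of the annulus between the original small set and $A_{\hat V}(c_V)$).
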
 
\begin{proof} 
Let $P\in\mathcal{P}$. 
Following the proof of \cite[Lemma 3.5]{jarner-roberts}, 
Jensen's inequality and the mean value theorem imply
with $\eta = 1-\lambda \alpha$
\begin{align*}
    P\hat{V}^\eta(x) &\le (\hat{V}-\beta \hat{V}^\alpha(x))^\eta
    \le \hat{V}^\eta(x) - \eta \beta \hat{V}^{\eta\alpha_\lambda}(x)
    &&x\notin C \\
    P\hat{V}^\eta(x) &\le b_{\hat{V}}^\eta && x\in C,
\end{align*}
where $\alpha_\lambda = \frac{\alpha-(1-\eta)}{\eta}
= \frac{\alpha(1-\lambda)}{1-\lambda \alpha}$.
Clearly
\begin{align}
    PV(x) &\le V(x) - \phi\circ V(x)
    + b_V  \charfun{x\in C},
    \label{eq:required-drift}
\end{align}
where $\phi(v) = \eta \beta v^{\alpha_\lambda}$ and $b_V =
b_{\hat{V}}^\eta + \phi(c_{\hat{V}})$. Let $\epsilon_b\in(0,1)$ and
take $c_V \in[c_{\hat{V}},\infty)$ sufficiently large so that
$\phi(c_V)\ge b_V(1-\epsilon_b)^{-1}$.
\end{proof} 

\begin{corollary} 
    \label{cor:practical-interpolated-drifts} 
Suppose Condition \ref{cond:uniform-polynomial-drift-and-minorisation}
holds.
Then, for any $\xi\in[0,1]$ and $\lambda\in[0,1)$, there exists
a constant $c_{*}<\infty$ such that for
all $(P_k)_{k\ge 1}\subset \mathcal{P}$ and
$\|f\|_{\hat{V}^{\alpha_{\lambda,\xi}}}<\infty$
where $\alpha_{\lambda,\xi} = \alpha(1-\lambda)(1-\xi)$,
\[
    \sum_{n\ge 0} (n+1)^{\frac{\alpha(1-\lambda)\xi}{1-\alpha}}
    | P^{(n)} f(x) - P^{(n)} f(x') | \le c_{*} 
    \|f\|_{\hat{V}^{\alpha_{\lambda,\xi}}}(\hat{V}^{1-\lambda \alpha}(x)
             +\hat{V}^{1-\lambda \alpha}(x')-1).
\]
\end{corollary}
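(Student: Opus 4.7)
The plan is simply to combine Proposition \ref{prop:drift-with-moments} with Corollary \ref{cor:poly} via a careful identification of exponents. First I would apply Proposition \ref{prop:drift-with-moments} with the chosen $\lambda\in[0,1)$ to the family $(P_k)_{k\ge 1}\subset\mathcal{P}$. This produces Condition \ref{cond:drift-for-coupling} with drift function $V=\hat{V}^{1-\lambda\alpha}$ and rate function $\phi(v)=(1-\lambda\alpha)\beta v^{\alpha_\lambda}$, where $\alpha_\lambda=\alpha(1-\lambda)/(1-\lambda\alpha)\in(0,1)$, together with constants $\epsilon_b,\epsilon_\nu,b_V,c_V$ that depend only on $\lambda$ and the constants in Condition \ref{cond:uniform-polynomial-drift-and-minorisation}; in particular, these constants (and hence the constant $c$ from Theorem \ref{thm:generic-explicit-rates}) are uniform over the family $\mathcal{P}$.

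Second, I would feed this into Corollary \ref{cor:poly} with the same $\xi\in[0,1]$. The key arithmetic is the identity
\begin{equation*}
1-\alpha_\lambda=\frac{1-\alpha}{1-\lambda\alpha},\qquad\text{so that}\qquad \frac{\xi\alpha_\lambda}{1-\alpha_\lambda}=\frac{\xi\alpha(1-\lambda)}{1-\alpha},
\end{equation*}
which matches the exponent of $(n+1)$ in the target bound. Moreover, the function class in Corollary \ref{cor:poly} is $V^{\alpha_\lambda(1-\xi)}$, and since
\begin{equation*}
V^{\alpha_\lambda(1-\xi)}=\hat{V}^{(1-\lambda\alpha)\alpha_\lambda(1-\xi)}=\hat{V}^{\alpha(1-\lambda)(1-\xi)}=\hat{V}^{\alpha_{\lambda,\xi}},
\end{equation*}
the norm $\|f\|_{V^{\alpha_\lambda(1-\xi)}}$ agrees with $\|f\|_{\hat{V}^{\alpha_{\lambda,\xi}}}$. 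The $V(x)+V(x')-1$ term on the right-hand side of Corollary \ref{cor:poly} directly becomes the desired $\hat{V}^{1-\lambda\alpha}(x)+\hat{V}^{1-\lambda\alpha}(x')-1$.

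Finally, I would collect all the constants. The overall constant $c_*$ is the product of the polynomial constant $c_{\alpha_\lambda,(1-\lambda\alpha)\beta,\epsilon_b,\xi}$ from Corollary \ref{cor:poly} and the Theorem \ref{thm:generic-explicit-rates} constant $c$; both depend only on $\lambda$, $\xi$, $\alpha$, $\beta$ and the constants/function in Condition \ref{cond:uniform-polynomial-drift-and-minorisation}, so in particular $c_*$ is uniform over $(P_k)_{k\ge 1}\subset\mathcal{P}$, which is exactly what is claimed.

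There is no real obstacle here beyond bookkeeping; the main thing to check carefully is the exponent identity $\alpha_\lambda/(1-\alpha_\lambda)=\alpha(1-\lambda)/(1-\alpha)$ and the matching of the power of $\hat{V}$ in the weight function, after which the two earlier results chain together to give the stated inequality.
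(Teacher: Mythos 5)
Your proposal is correct and follows exactly the paper's route: the paper's proof of this corollary is the one-line combination of Proposition \ref{prop:drift-with-moments} and Corollary \ref{cor:poly}, and your exponent checks ($1-\alpha_\lambda=(1-\alpha)/(1-\lambda\alpha)$, $V^{\alpha_\lambda(1-\xi)}=\hat{V}^{\alpha_{\lambda,\xi}}$) together with the uniformity of the constants over $\mathcal{P}$ are precisely the bookkeeping the paper leaves implicit.
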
 
\begin{proof} 
Proposition \ref{prop:drift-with-moments}
and Corollary
\ref{cor:poly} imply the claim.
\end{proof} 


\section{Applications} 
\label{sec:applications} 

We discuss next two specific applications of our results. Both 
applications are related to the evaluation of the efficiency of Markov chain
Monte Carlo (MCMC) schemes in terms of asymptotic
variance: the first application involves so-called pseudo-marginal MCMC 
\cite{andrieu-roberts,andrieu-vihola-pseudo}, while the second application
is related to a general comparison result 
of inhomogeneous Markov chains recently established in \cite{maire-douc-olsson}.

In both cases, one is interested in estimating an integral 
\[
    \pi(f) \defeq \int_{\R^d} f(x) \pi(x) \ud x, 
\]
where $\pi(x)$ is a probability density
and $f$ is a $\pi$-integrable function. The efficiency criterion 
is the so-called asymptotic variance
\[
    \sigma^2(f) \defeq \lim_{n\to\infty} \E\bigg[ \frac{1}{\sqrt{n}}
    \sum_{k=1}^n \big[f(X_k) - \pi(f)\big]\bigg]^2,
\]
where $(X_k)_{k\ge 0}$ denotes the Markov chain with 
initial distribution $\pi$ and with the same $\pi$-invariant 
transition kernel(s) as the MCMC sampler.

\subsection{Efficiency of pseudo-marginal MCMC}
\label{sec:pseudo-application} 

The pseudo-margi\-nal algorithm is relevant to situations where
the density $\pi$ cannot be evaluated point-wise,
which prevents a straightforward implementation of 
the Metropolis-Hastings algorithm for example.
Such a situation occurs naturally, for instance 
when $\pi(x)$ is a marginal density
of a higher-dimensional density. 
As pointed out in \cite{andrieu-roberts,andrieu-vihola-pseudo} it is however 
possible to implement a valid (auxiliary variable)
Metropolis-Hastings algorithm in this scenario, by using 
non-negative unbiased estimators of the density values $\pi(x)$.
Interestingly, regardless
of the accuracy of the related estimator, the corresponding Markov
chain will be ergodic with minimal assumptions, and therefore yield
ergodic averages convergent to the integral of interest 
\cite{andrieu-roberts,andrieu-vihola-pseudo}.

However, the efficiency of the algorithm usually depends heavily on
the properties of the estimators of $\pi(x)$. If the accuracy is increased, the
pseudo-marginal algorithm tends to behave in a way similar to the ideal algorithm
for which exact values of $\pi(x)$ are used instead of estimators.
In particular, let $N\ge 1$ be a parameter controlling the accuracy of the estimator
(such as the number of estimators used when using an averaging property to
reduce variability), and let $\sigma_N^2(f)$ be the asymptotic
variance of the related pseudo-marginal algorithm. Then, under general
conditions, $\sigma_N^2(f) \to \sigma^2(f)$ as $N\to\infty$, where
$\sigma^2(f)$ is the asymptotic variance of the ideal algorithm
\cite[Theorem 21]{andrieu-vihola-pseudo}.

The key assumption required for the aforementioned result to hold
is that the integrated
autocorrelation series converge uniformly, that is, 
\begin{equation}
        \lim_{n\to\infty} \sup_{N\ge 1}\bigg|
        \sum_{k=n}^\infty 
        \E[ \bar{f}(\tilde{X}_0^{(N)})\bar{f}(\tilde{X}_k^{(N)})] \bigg|=0,
        \qquad\text{where }\bar{f}(x) = f(x) - \pi(f),
        \label{eq:act-tail}
\end{equation}
and where $(\tilde{X}_k^{(N)})_{k\ge 0}$ corresponds to the
Markov chain generated by the pseudo-marginal chain with accuracy parameter $N$.

The condition in \eqref{eq:act-tail} is relatively straightforward to check
whenever the pseudo-marginal algorithms are geometrically ergodic 
with uniformly bounded drift and minorisation constants
\cite{baxendale,meyn-tweedie-computable}.
However pseudo-marginal algorithms are sub-geometric
whenever the density estimators of $\pi(x)$ can take arbitrarily large values 
\cite[Proposition 13]{andrieu-vihola-pseudo}.

This is the situation where Corollary \ref{cor:practical-interpolated-drifts} becomes relevant, as it is
straightforward to check \eqref{eq:act-tail} under simultaneous (in $N$) polynomial drift
and minorisation conditions. 
In particular we may write for any $N$ for which the drift and minorisation conditions hold
\[
    \bigg|
        \sum_{k=n}^\infty 
        \E[ \bar{f}(\tilde{X}_0^{(N)})\bar{f}(\tilde{X}_k^N)] \bigg|
    \le \E\bigg[ \big|\bar{f}(X_0^{(N)}\big|
    \sum_{k=n}^\infty \Big| \E\big[\bar{f}(\tilde{X}_k^{(N)})\;\big|\;
    \tilde{X}_0^{(N)}\big]\Big|\bigg]  .
\]
The latter sum can be bounded by Corollary
\ref{cor:practical-interpolated-drifts}; see \cite[Proposition
19]{andrieu-vihola-pseudo} for details. We point out the importance of
having explicit quantitative bounds here in order to ensure that an
upper bound independent of $N$ exists, that is, the constant $c_*$ in
Corollary \ref{cor:practical-interpolated-drifts} can be taken
independent of $N$.


\subsection{Ordering inhomogeneous Markov chains}
\label{sec:inhomogeneous-application} 

In a number of scenarios MCMC algorithms may rely on the composition
of several $\pi$-reversible MCMC kernels. For example when two sampling strategies are available, that is two $\pi$-reversible 
Markov kernels $P_0$ and $Q_0$ can be implemented,  one may consider implementing the algorithm which cycles between these two kernels. The recent result of Maire, Douc and Olsson
\cite[Theorem 4]{maire-douc-olsson} shows that if $P_1$ and $Q_1$ form 
another pair of $\pi$-reversible kernels, and if 
$P_0\preccurlyeq P_1$ and
$Q_0\preccurlyeq Q_1$ in the covariance order, 
then the asymptotic variances related to the two algorithms satisfy
$\sigma_1^2(f) \le
\sigma_0^2(f)$.

The key assumption required by \cite[Theorem 4]{maire-douc-olsson} 
is that the integrated autocorrelation series converges absolutely;
using notation analogous to \eqref{eq:act-tail} 
\begin{equation}
    \sum_{k=1}^\infty \Big(
    \big|\E\big[\bar{f}(X_0^{(i)})\bar{f}(X_k^{(i)})\big]\big|
    + \big|\E\big[\bar{f}(X_1^{(i)})\bar{f}(X_{k+1}^{(i)})\big]\big|
    \Big)<\infty,\qquad i\in\{0,1\},
    \label{eq:maire-douc-olsson}
\end{equation}
where $(X_k^{(i)})_{k\ge 0}$ is the inhomogeneous 
Markov chain with initial distribution $\pi$ and with alternating kernels 
$P_i$ and $Q_i$.

Under geometric ergodicity, \eqref{eq:maire-douc-olsson} 
is relatively easy to check \cite{maire-douc-olsson}.
In the sub-geometric case, we are unaware of any results in the
literature which would be
directly applicable to verify \eqref{eq:maire-douc-olsson}.
When its assumptions are satisfied one can use Theorem
\ref{thm:generic-explicit-rates} to deduce
\eqref{eq:maire-douc-olsson}, exploiting the fact that our results hold
for inhomogeneous Markov chains. In particular, in the polynomial
scenario, Corollary \ref{cor:practical-interpolated-drifts} may be
applied following the arguments in \cite[Proposition
19]{andrieu-vihola-pseudo}. 



\section{Definitions: Coupling and bivariate drift} 
\label{sec:coupling} 

\begin{definition}[Coupling construction] 
    \label{def:coupling-kernel} 
Assume Condition \ref{cond:drift-for-coupling},
denote $\bar{C}=C\times C$ and
define the Markov kernels $\check{P}_k$ on the product space
$(\mathsf{X}\times\mathsf{X},\B(\mathsf{X})\times\B(\mathsf{X}))$
by
\begin{align*}
    \check{P}_k(x,x';A,A')
    \defeq P_k(x,A) P_k(x',A') \charfun{(x,x')\notin \bar{C}}
    + Q_k(x,A) Q_k(x',A') \charfun{(x,x')\in \bar{C}} 
\end{align*}
where $Q_k(x,A) \defeq (1-\epsilon_\nu)^{-1} \big( P_k(x,A) - \epsilon_\nu
\nu_k(A)\big)$.

Define then the Markov
kernels $\bar{P}_k$ on $(\mathsf{X}^2\times\{0,1\},
\B(\mathsf{X})^2\times\mathcal{P}(\{0,1\}))$
as follows for $\check{A}\in\B(\mathsf{X})\times\B(\mathsf{X})$,
\begin{align*} 
   \bar{P}_k(x,x',0; \check{A}\times\{0\})
   &= \big(1-\epsilon_\nu \charfun{(x,x')\in \bar{C}}\big)
   \check{P}_k(x,x'; \check{A}) \\ 
   \bar{P}_k(x,x',0; \check{A}\times\{1\})
   &= \epsilon_\nu \charfun{(x,x')\in\bar{C}} 
   \nu_k(\{x\in\mathsf{X}\given (x,x)\in \check{A}\}) \\
   \bar{P}_k(x,x',1; \check{A}\times\{0\})
   &= 0 \\
   \bar{P}_k(x,x',1; \check{A}\times\{1\})
   &= \charfun{x=x'}\int P_k(x,\ud y) \charfun{(y,y)\in \check{A}}
   + \delta_{(x,x')}(\check{A}).
\end{align*}
Suppose $(X_n,X_n',D_n)_{n\ge 0}$ is a Markov chain defined by the 
kernels $\bar{P}_1,\bar{P}_2,\ldots,\bar{P}_n$
and with $(X_0,X_0',D_0)\equiv (x,x',d)$. We denote 
the probability and the expectation associated with the chain as 
$\P_{x,x',d}$ and 
$\E_{x,x',d}$, respectively, and define the stopping times 
$T_1\defeq \inf\{n\ge 0: (X_{n},X_{n}')\in\bar{C}\}$ 
and $T_k\defeq \inf\{ n> T_{k-1}: (X_{n},X_{n}')\in\bar{C}\}$ for $k\ge 2$,
and $\tau \defeq\inf\{n\ge 0: D_n=1\}$,
with the convention $\inf\emptyset = \infty$.
\end{definition} 

Suppose $D_0\equiv d=0$, then Definition \ref{def:coupling-kernel}
formalises a coupling with probability $\epsilon_\nu$ each time
$(X_n,X_n')\in\bar{C}$; the stopping time $\tau$ is a coupling time,
and $X_{\tau+k}\charfun{\tau<\infty}=X'_{\tau+k}\charfun{\tau<\infty}$
for all $k\ge 0$ $\P_{x,x',0}$-almost surely. If the coupling was not
successful, the chains follow independently $\check{P}_k$ at time $k$ 
until hitting $\bar{C}$ again.

\begin{proposition} 
Consider the Markov chain $(X_n,X'_n,D_n)$ in Definition \ref{def:coupling-kernel}.
Then, $(X_n)_{n\ge 0}$ and $(X'_n)_{n\ge 0}$ follow marginally $P^{(n)}$ and
specifically
\[
    {\P}_{x,x',0}(X_n \in A) = P^{(n)}(x,A)
    \qquad\text{and}\qquad
    {\P}_{x,x',0}(X'_n \in A) = P^{(n)}(x',A),
\]
for all $n\ge 0$, all $(x,x')\in \mathsf{X}^2$
and any $A\in\B(\mathsf{X})$.
\end{proposition}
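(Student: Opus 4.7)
The plan is to prove the claim by induction on $n$, using the key observation that the $X$-marginal of each transition kernel $\bar{P}_n$ depends on $(x_0,x_0',d)$ only through $x_0$ and equals $P_n(x_0,\cdot)$. The base case $n=0$ is immediate from the initial condition $(X_0,X_0',D_0)\equiv(x,x',0)$.

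For the inductive step, denote by $\bar{P}_n^X(x_0,x_0',d;A)\defeq \bar{P}_n(x_0,x_0',d;A\times\mathsf{X}\times\{0,1\})$ the $X$-marginal of $\bar{P}_n$. I would first verify by a short case analysis that, for every $(x_0,x_0',d)$ reachable from $(x,x',0)$, one has $\bar{P}_n^X(x_0,x_0',d;A) = P_n(x_0,A)$. When $d=0$ and $(x_0,x_0')\notin\bar{C}$, only the $\check{P}_n$-branch contributes, and $\check{P}_n(x_0,x_0';A\times\mathsf{X}) = P_n(x_0,A)P_n(x_0',\mathsf{X}) = P_n(x_0,A)$. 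When $d=0$ and $(x_0,x_0')\in\bar{C}$, summing the $\{0\}$- and $\{1\}$-contributions yields $(1-\epsilon_\nu)Q_n(x_0,A)+\epsilon_\nu\nu_n(A) = P_n(x_0,A)$ by the very definition of the residual kernel $Q_n$. When $d=1$, the inductive argument used in parallel must ensure that $x_0=x_0'$ $\P_{x,x',0}$-almost surely, and then the first summand of $\bar{P}_n(\cdot,\cdot,1;\cdot\times\{1\})$ gives $\int P_n(x_0,\ud y)\charfun{y\in A} = P_n(x_0,A)$.

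Combining this with the tower property and the inductive hypothesis yields
\[
\P_{x,x',0}(X_n\in A) = \E_{x,x',0}\big[\bar{P}_n^X(X_{n-1},X_{n-1}',D_{n-1};A)\big] = \E_{x,x',0}\big[P_n(X_{n-1},A)\big],
\]
and the right-hand side equals $\int P^{(n-1)}(x,\ud y)P_n(y,A) = P^{(n)}(x,A)$, completing the induction. The argument for $X_n'$ is symmetric: swapping the two coordinates in the case analysis (noting that $\check{P}_n$ is symmetric in its two inputs and that the coupling step places mass on the diagonal, affecting both marginals in the same way) gives $\bar{P}_n^{X'}(x_0,x_0',d;A) = P_n(x_0',A)$, and the induction proceeds identically.

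The main obstacle I anticipate is the book-keeping in the case $d=1$: one must confirm that, starting from $D_0=0$, the joint chain has $X_n=X_n'$ whenever $D_n=1$. This follows because the only way to enter $D=1$ is through the summand $\epsilon_\nu\charfun{(x,x')\in\bar{C}}\nu_k(\{x\in\mathsf{X}:(x,x)\in\check{A}\})$, which is supported on the diagonal, and thereafter the first summand of $\bar{P}_n(\cdot,\cdot,1;\cdot)$ preserves the diagonal. Apart from this consistency check, all the remaining verifications are routine marginalisation computations.
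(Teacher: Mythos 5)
Your proposal is correct and follows essentially the same route as the paper: the paper's proof likewise rests on the observation that the $X$- and $X'$-marginals of $\bar{P}_k(x,x',0;\cdot)$ are $P_k(x,\cdot)$ and $P_k(x',\cdot)$ respectively, together with the fact that $X_n=X_n'$ almost surely on $\{D_n=1\}$, with the induction left implicit. Your write-up merely makes the case analysis (on $d$ and on membership in $\bar{C}$) and the diagonal-preservation argument explicit, which is a faithful elaboration of the same proof.
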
 
\begin{proof} 
It is easy to see that for any $(x,x')\in
\mathsf{X}^2$ and $A\in\B(\mathsf{X})$,
\[
    \bar{P}_k(x,x',0; A\times\mathsf{X}\times\{0,1\})
    = P_k(x,A)
    \quad\text{and}\quad
    \bar{P}_k(x,x',0; \mathsf{X}\times A\times\{0,1\})
    = P_k(x',A),
\]
and $\P_{x,x',0}(X_n=X_n'\mid D_n=1)=1$.
\end{proof} 

\begin{lemma} 
    \label{lem:joint-drifts} 
Assume Condition \ref{cond:drift-for-coupling} and
denote $\bar{V}(x,x') \defeq V(x)+V(x')-1$, then
\begin{align}
    \bar{P}_k \bar{V}(x,x',0) &\le \bar{V}(x,x') - \epsilon_b \phi
    \circ \bar{V}(x,x')
    && (x,x')\notin \bar{C}
    \tag{i}
    \label{eq:joint-drift-coupling1} \\
    \bar{P}_k  \bar{V}(x,x',0) &\le 
    2 (b_V + c_V) - 1
    &&(x,x') \in \bar{C}
    \tag{ii}
    \label{eq:joint-drift-coupling2} \\
    \bar{P}_k \bar{V}(x,x',0) &\le
    \bar{V}(x,x') - \epsilon_b(\phi \circ \bar{V})(x,x') +
    \bar{b}\charfun{(x,x')\in\bar{C}}&&
    (x,x')\in\mathsf{X}^2
    \tag{iii} \label{eq:simple-drift} \\
    \check{P}_k \bar{V}(x,x') &\le 
    2(1-\epsilon_\nu)^{-1} \big(b_V+c_V\big) -1 
    &&(x,x') \in \bar{C},
    \tag{iv}
    \label{eq:joint-drift2}
\end{align}
where $\bar{b} = 2b_V + \epsilon_b\phi(1)$.
\end{lemma}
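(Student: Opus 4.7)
My plan is to first observe that, for both $(x,x')\in\bar{C}$ and $(x,x')\notin\bar{C}$, the quantity $\bar{P}_k\bar{V}(x,x',0)$ collapses to $P_kV(x)+P_kV(x')-1$. Outside $\bar{C}$ this is immediate since $\bar{P}_k$ acts there as $P_k\otimes P_k$ on the $d=0$ component; inside $\bar{C}$ the two contributions $(1-\epsilon_\nu)Q_kV(x)+\epsilon_\nu\nu_k(V)$ and $(1-\epsilon_\nu)Q_kV(x')+\epsilon_\nu\nu_k(V)$ reassemble into $P_kV(x)+P_kV(x')$ by the very definition of $Q_k$. With this in hand, I would also record the concavity-based super-additivity inequality
\[
    \phi\circ V(x)+\phi\circ V(x')\geq \phi\circ\bar{V}(x,x')+\phi(1),\qquad x,x'\in\mathsf{X},
\]
which follows from monotonicity in $a$ of $a\mapsto\phi(a)-\phi(a+b-1)$ for $b\geq 1$ (a consequence of the concavity of $\phi$ and $V\geq 1$), evaluated at $a=1$.

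For \eqref{eq:joint-drift-coupling1} the argument proceeds by a case split on which of $x,x'$ lies outside $C$. When both lie outside $C$, no $b_V$ term appears and the drift yields $\bar{V}(x,x')-\phi\circ V(x)-\phi\circ V(x')$; applying the concavity inequality and using $\phi\geq 0$ together with $\epsilon_b\leq 1$ gives the claim. When exactly one of the two, say $x$, lies outside $C$, the additional term $b_V$ produced by $x'\in C$ is absorbed using $\phi\circ V(x)\geq b_V/(1-\epsilon_b)$, which rewrites as $-\phi\circ V(x)+b_V\leq-\epsilon_b\phi\circ V(x)$; combining with the concavity inequality (and $\phi(1)\geq 0$) finishes the case. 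Statement \eqref{eq:joint-drift-coupling2} is then immediate from $P_kV\leq V+b_V\leq c_V+b_V$ on $C$. For \eqref{eq:simple-drift} one merges the two cases: on $\bar{C}$ the concavity inequality applied to $P_kV(x)+P_kV(x')-1\leq\bar{V}(x,x')-\phi\circ V(x)-\phi\circ V(x')+2b_V$ gives $\bar{V}(x,x')-\phi\circ\bar{V}(x,x')-\phi(1)+2b_V$, and the trivial bound $-\phi\circ\bar{V}\leq-\epsilon_b\phi\circ\bar{V}$ yields the claim since $\bar{b}=2b_V+\epsilon_b\phi(1)\geq 2b_V-\phi(1)$.

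Finally, for \eqref{eq:joint-drift2} I would write $Q_kV(x)=(1-\epsilon_\nu)^{-1}\bigl(P_kV(x)-\epsilon_\nu\nu_k(V)\bigr)$, drop the non-negative subtracted terms, and use $P_kV(x)\leq c_V+b_V$ for $x\in C$. The main delicate point of the whole argument is the case split for \eqref{eq:joint-drift-coupling1}, where it is essential to exploit the off-$C$ lower bound $\phi\circ V\geq b_V/(1-\epsilon_b)$ to trade the unwanted $b_V$ contribution against a fraction of the drift $\phi\circ V(x)$; everything else is a direct consequence of the univariate drift and the concavity of $\phi$.
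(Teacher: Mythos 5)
Your proof is correct and follows essentially the same route as the paper: the reduction $\bar{P}_k\bar{V}(x,x',0)=P_kV(x)+P_kV(x')-1$, the concavity estimate $\phi\circ V(x)+\phi\circ V(x')\ge\phi\circ\bar{V}(x,x')+\phi(1)$ (the paper's inequality \eqref{eq:convex-est}), the use of $\inf_{x\notin C}\phi\circ V\ge b_V(1-\epsilon_b)^{-1}$ to absorb the $b_V$ term off $\bar{C}$, and the direct bounds via $P_kV\le c_V+b_V$ on $C$ and the definition of $Q_k$ for \eqref{eq:joint-drift2}. Your explicit case split for \eqref{eq:joint-drift-coupling1} is merely a presentational variant of the paper's single inequality chain, and your (correct) appeal to concavity fixes what is only a typo ("convex") in the paper's wording.
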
 
\begin{proof} 
Condition \ref{cond:drift-for-coupling}
implies for $(x,x')\notin\bar{C}$, 
\begin{align*}
    \bar{P}_k \bar{V}(x,x',0) 
    &\le \bar{V}(x,x') - \phi\circ V(x) - \phi\circ V(x') 
    + b_V \big(\charfun{x\in C}+\charfun{x'\in C}\big) \\
    &\le \bar{V}(x,x') - \epsilon_b \big(\phi\circ V(x) + \phi\circ
    V(x')\big)
    - (1-\epsilon_b) \inf_{z\notin C}\phi\circ V(z) 
    + b_V \\
    &\le \bar{V}(x,x') - \epsilon_b \phi\circ \bar{V}(x,x'), 
\end{align*}
where the last inequality follows because $\phi$ is convex and
non-decreasing and thus
\begin{equation}
    \phi \circ \bar{V}(x,x') - \phi\circ V(x)
\le \phi\circ V(x') - \phi(1).
\label{eq:convex-est}
\end{equation}
This establishes
\eqref{eq:joint-drift-coupling1}. The bound
\eqref{eq:joint-drift2} follows from
\begin{equation}
    \check{P}_k \bar{V}(x,x') 
    = Q_k V(x) + Q_k V(x') -1
    \le 2 (1-\epsilon_\nu)^{-1} \big(c_V + b_V - \epsilon_\nu
    \nu_k(V)\big) - 1.
    \label{eq:sharp-bound-p-check}
\end{equation}
For 
\eqref{eq:joint-drift-coupling2}, let us write
for $(x,x')\in\bar{C}$,
\[
    \bar{P}_k  \bar{V}(x,x',0) 
    = P_k V(x) + P_k V(x') - 1 
    \le \bar{V}(x,x') - \big(\phi\circ V(x) + \phi\circ V(x')\big) +
    2b_V.
\]
Finally, we turn to \eqref{eq:simple-drift} and observe that the above
inequality with \eqref{eq:convex-est} and
\eqref{eq:joint-drift-coupling1} imply
\begin{align*}
    \bar{P}_k  \bar{V}(x,x',0) 
    &\le \bar{V}(x,x') - \epsilon_b \phi\circ \bar{V}(x,x')
    \charfun{(x,x')\notin \bar{C}} \\
    &\phantom{\le}- \big( \phi\circ\bar{V}(x,x') + \phi(1) +
    2b_V\big)\charfun{(x,x')\in \bar{C}} \\
    &\le \bar{V}(x,x') - \epsilon_b \phi\circ \bar{V}(x,x') \\
    &\phantom{\le}+ \sup_{(x,x')\in\bar{C}} \big[ 2b_V + \phi(1) - (1-\epsilon_b)
    \phi\circ \bar{V}(x,x')\big]\charfun{(x,x')\in \bar{C}}.
\end{align*}
The claim follows noticing that $\phi\circ \bar{V}(x,x')\ge \phi(1)$.
\end{proof} 


\section{Proof of Theorem \ref{thm:generic-explicit-rates}} 
\label{sec:proof} 

We give the skeleton of the proof of Theorem
\ref{thm:generic-explicit-rates} next, 
and postpone bounding the involved terms to lemmas.

\begin{proof}[Proof of Theorem \ref{thm:generic-explicit-rates}] 
It is sufficient to prove the claim assuming
$\|f\|_{W}=1$.
Consider the coupling construction in Definition 
\ref{def:coupling-kernel}.
We may write
\begin{align*}
&\sum_{n\ge 0} \Psi_{1}\big(r(n)\big)|P^{(n)}f(x)-P^{(n)}f(x')| \\
&= \sum_{n\ge 0}
\Psi_{1}\big(r(n)\big)
\big|{\E}_{x,x',0}\big[\big(f(X_n)-f(X'_n)\big)\charfun{\tau>n}\big]\big| \\                                       \
&\le {\E}_{x,x',0}\bigg[\sum_{n= 0}^{\tau-1}
\Psi_{1}\big(r(n)\big) W(X_n) \bigg]
+ {\E}_{x,x',0}\bigg[\sum_{n= 0}^{\tau-1}
\Psi_{1}\big(r(n)\big) W(X_n)\bigg].
\end{align*}
Because $\Psi_1(x)\Psi_2(y)\le x+y$, we obtain the bound
\[
    \sum_{n\ge 0} \Psi_{1}(r(n))|P^{(n)}f(x)-P^{(n)}f(x')|
    \le 2\bigg[E_{\text{1}}(x,x')+\frac{E_{2}(x,x')}{\phi(1)}\bigg],
\]
where the terms on the right are defined as
\begin{align*}
E_{1}(x,x')\defeq{\E}_{x,x',0}\left[\sum_{n=0}^{\tau-1}r(n)\right]
\quad\text{and}\quad
E_{2}(x,x')\defeq
{\E}_{x,x',0}\bigg[\sum_{n=0}^{\tau-1} 
  \phi \circ \bar{V}(X_n,X'_n)\bigg],
\end{align*}
and these terms are bounded by Lemma \ref{lem:phi-v} and
\ref{lem:r-phi-sum} below. 
\end{proof} 

\begin{lemma} 
    \label{lem:log-concavity} 
Let $\phi:[1,\infty)\to(0,\infty)$ be concave, non-decreasing and
differentiable, and let $r(n)$ be as defined in \eqref{eq:def-r-n}.
Then, $r(n)$ is non-decreasing and 
for all $n,m\ge 0$,
\begin{align}
    r(n+m) &\le
r(n)r(m)
\tag{i} \label{eq:log-concavity} \\
    r(n+m) - r(n) & \le \epsilon_b
    (\phi'\circ H_\phi^{-1})(\epsilon_b n) r(n)
    \sum_{k=1}^{m} r(k).
\tag{ii} \label{eq:diff-est} 
\end{align}
\end{lemma}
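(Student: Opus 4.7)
The plan is to work throughout with the auxiliary function $g(t) \defeq \phi\circ H_\phi^{-1}(t)$, so that $r(n) = g(\epsilon_b n)/\phi(1)$ and in particular $g(0) = \phi(H_\phi^{-1}(0)) = \phi(1)$. Monotonicity of $r$ is immediate: since $\phi>0$, $H_\phi$ is strictly increasing on $[1,\infty)$, hence $H_\phi^{-1}$ is non-decreasing, and composing with the non-decreasing $\phi$ shows that $g$, and hence $r$, is non-decreasing.

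For \eqref{eq:log-concavity} I would show that $\log g$ is concave. By the inverse function theorem, $(H_\phi^{-1})'(t) = 1/H_\phi'(H_\phi^{-1}(t)) = \phi(H_\phi^{-1}(t)) = g(t)$, so the chain rule gives $g'(t) = \phi'(H_\phi^{-1}(t))\, g(t)$ and consequently $(\log g)'(t) = \phi'(H_\phi^{-1}(t))$. Concavity of $\phi$ makes $\phi'$ non-increasing; combined with $H_\phi^{-1}$ being non-decreasing, this forces $(\log g)'$ to be non-increasing, i.e.\ $\log g$ is concave. Then for $a,b\ge 0$ a shift of the integration variable gives $\log g(a+b) - \log g(a) = \int_0^b (\log g)'(a+u)\,\ud u \le \int_0^b (\log g)'(u)\,\ud u = \log g(b) - \log g(0)$, which rearranges to $g(a+b)g(0) \le g(a)g(b)$. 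Taking $a=\epsilon_b n$, $b=\epsilon_b m$ and dividing by $\phi(1)^2$ (using $g(0)=\phi(1)$) delivers $r(n+m)\le r(n)r(m)$.

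For \eqref{eq:diff-est}, I would start from the integral representation $r(n+m) - r(n) = \frac{1}{\phi(1)} \int_{\epsilon_b n}^{\epsilon_b(n+m)} \phi'(H_\phi^{-1}(s))\, g(s)\,\ud s$. The very argument used above shows $s \mapsto \phi'(H_\phi^{-1}(s))$ is non-increasing, so it is dominated on $[\epsilon_b n,\epsilon_b(n+m)]$ by its left-endpoint value $\phi'(H_\phi^{-1}(\epsilon_b n))$. Then, since $g$ is non-decreasing, the Riemann upper-sum estimate $\int_{\epsilon_b n}^{\epsilon_b(n+m)} g(s)\,\ud s \le \epsilon_b \sum_{k=1}^m g(\epsilon_b(n+k))$ holds (each subinterval of length $\epsilon_b$ contributes at most $\epsilon_b$ times the value at its right endpoint). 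Finally apply \eqref{eq:log-concavity} in the equivalent form $g(\epsilon_b(n+k)) \le g(\epsilon_b n)\, g(\epsilon_b k)/\phi(1)$ to split the product and recognise the factors of $r$, yielding the claimed bound $r(n+m) - r(n) \le \epsilon_b\, \phi'(H_\phi^{-1}(\epsilon_b n))\, r(n) \sum_{k=1}^m r(k)$.

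The only genuinely non-trivial step is establishing log-concavity of $g$, which comes down to the identity $(\log g)'(t) = \phi'(H_\phi^{-1}(t))$ together with concavity of $\phi$. Once that is in hand, \eqref{eq:log-concavity} is essentially immediate, and \eqref{eq:diff-est} is \eqref{eq:log-concavity} combined with crude monotonicity of $g$ on unit-length subintervals; no serious obstacle is anticipated, only careful bookkeeping of the factors $\epsilon_b$ and $\phi(1)$.
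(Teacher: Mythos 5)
Your proposal is correct and follows essentially the same route as the paper: the key identity $(\log g)'(t)=\phi'\circ H_\phi^{-1}(t)$ (the paper writes it for the rescaled $r(t)$), log-concavity via a shift of the integration variable for \eqref{eq:log-concavity}, and for \eqref{eq:diff-est} bounding the increment by the left-endpoint value of the non-increasing factor $\phi'\circ H_\phi^{-1}$ together with monotonicity of $r$ and an application of \eqref{eq:log-concavity}. Your use of the fundamental theorem of calculus with a right-endpoint Riemann sum in place of the paper's mean value theorem on unit intervals is only a cosmetic difference, and your bookkeeping of the $\epsilon_b$ and $\phi(1)$ factors checks out.
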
 
\begin{proof} 
Denote $r(t) \defeq (\phi \circ H_\phi^{-1})(\epsilon_b t)/\phi(1)$ for $t\in\R_+$,
and compute
\[
    r'(t) = \epsilon_b (\phi' \circ H_\phi^{-1})(\epsilon_b t)
    r(t) \ge 0
    \qquad\text{and}\qquad
    (\log r)'(t) = \epsilon_b (\phi'\circ H_\phi^{-1})(\epsilon_b t).
\]
The latter is non-increasing, therefore \eqref{eq:log-concavity}
follows from
\begin{align*}
    \log r(n+m) - \log r(n) 
    = \int_n^{n+m} (\log r)'(t) \ud t 
    \le \int_0^{m} (\log r)'(t) \ud t
= \log r(m),
\end{align*}
because $r(0)=1$. By the mean value theorem
\begin{align*}
    r(n+m) - r(n)
    &= \sum_{k=n}^{n+m-1} \big(r(k+1)-r(k)\big) 
    = \sum_{k=n}^{n+m-1} r'(k+\xi_k),
\end{align*}
for some $\xi_n,\ldots,\xi_{n+m-1}\in[0,1]$.
Observe that $r'(k+\xi_k)\le \epsilon_b (\phi'\circ
H_\phi^{-1})(\epsilon_b n)
r(k+1)$, so
\begin{align*}
    r(n+m) - r(n)
    &\le \epsilon_b (\phi'\circ H_\phi^{-1})(\epsilon_b n) 
    \sum_{k=n}^{n+m-1} r(k+1).
\end{align*}
We deduce \eqref{eq:diff-est} by applying \eqref{eq:log-concavity}.
\end{proof} 

\begin{lemma}
    \label{lem:phi-v} 
Assume Condition \ref{cond:drift-for-coupling} and 
consider the coupling construction in
Definition \ref{def:coupling-kernel}. Then,
\begin{align}
\E_{x,x',0}\bigg[\sum_{n=0}^{\tau-1} \phi \circ \bar{V}(X_n,X_n') \bigg]
&\le \frac{1}{\epsilon_b}\bar{V}(x,x') + 
\frac{\bar{b}}{\epsilon_b\epsilon_\nu},
\end{align}                                                                  
where $\bar{b}$ is defined in Theorem \ref{thm:generic-explicit-rates}.
\end{lemma}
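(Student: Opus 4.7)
The strategy combines the one-step bivariate drift of Lemma \ref{lem:joint-drifts}\eqref{eq:simple-drift} with a geometric tail estimate for the number of visits to $\bar{C}$ accumulated strictly before the coupling time $\tau$. The basic observation is that $\{\tau>n\}=\{D_n=0\}$ under $\P_{x,x',0}$, so the drift \eqref{eq:simple-drift} applies precisely at those times which contribute to the sum of interest.

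For the drift step, I would work with the stopped process $Y_n \defeq \bar{V}(X_{n\wedge\tau},X'_{n\wedge\tau})$. A routine computation using \eqref{eq:simple-drift} gives
\[
\E_{x,x',0}[Y_{n+1}-Y_n\mid\mathcal{F}_n] \le \charfun{\tau>n}\bigl[-\epsilon_b\,\phi\circ\bar{V}(X_n,X'_n)+\bar{b}\,\charfun{(X_n,X'_n)\in\bar{C}}\bigr].
\]
Telescoping from $n=0$ to $N-1$, taking expectation, and using $Y_N\ge 0$ yield
\[
\epsilon_b\,\E_{x,x',0}\bigg[\sum_{n=0}^{(N\wedge\tau)-1}\phi\circ\bar{V}(X_n,X'_n)\bigg] \le \bar{V}(x,x') + \bar{b}\,\E_{x,x',0}\bigg[\sum_{n=0}^{(N\wedge\tau)-1}\charfun{(X_n,X'_n)\in\bar{C}}\bigg],
\]
and monotone convergence as $N\to\infty$ removes the truncation.

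The second ingredient is the estimate $\E_{x,x',0}\bigl[\sum_{n=0}^{\tau-1}\charfun{(X_n,X'_n)\in\bar{C}}\bigr] \le 1/\epsilon_\nu$. From Definition \ref{def:coupling-kernel}, whenever $(X_n,X'_n)\in\bar{C}$ and $D_n=0$ the event $\{D_{n+1}=1\}$ has conditional probability $\epsilon_\nu$ given $\mathcal{F}_n$; hence the successive coupling attempts are independent Bernoulli$(\epsilon_\nu)$ trials, and the number $N$ of uncoupled visits to $\bar{C}$ satisfies $\P_{x,x',0}(N\ge j)\le(1-\epsilon_\nu)^j$, giving $\E_{x,x',0}[N]\le(1-\epsilon_\nu)/\epsilon_\nu\le 1/\epsilon_\nu$. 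Dividing the preceding display by $\epsilon_b$ and inserting this bound proves the lemma.

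The only subtlety I anticipate is ensuring that the telescoping argument does not implicitly require integrability of $\tau$; using the stopped process $Y_n$ together with monotone convergence sidesteps this cleanly, so I expect the proof to amount to bookkeeping rather than any deep technical step.
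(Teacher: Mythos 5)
Your argument is essentially the paper's: the bivariate drift \eqref{eq:simple-drift} combined with a comparison/optional-stopping step (the paper simply invokes Proposition \ref{prop:comparison}, you re-derive it via the stopped process $Y_n$ and monotone convergence), plus the observation that the expected number $N$ of pre-$\tau$ visits to $\bar{C}$ is at most $1/\epsilon_\nu$. One small slip: the tail bound $\P_{x,x',0}(N\ge j)\le(1-\epsilon_\nu)^{j}$ is false as stated (starting from $(x,x')\in\bar{C}$ with $D_0=0$ one has $\P_{x,x',0}(N\ge 1)=1$); the correct exponent is $j-1$, since $\{N\ge j\}=\{\tau>T_j\}$ only requires failure of the $j-1$ coupling attempts made at times $T_1+1,\ldots,T_{j-1}+1$, giving $\E_{x,x',0}[N]=\sum_{j\ge 1}(1-\epsilon_\nu)^{j-1}=1/\epsilon_\nu$ exactly, as in the paper. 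Since the only thing you feed into the final estimate is $\E_{x,x',0}[N]\le 1/\epsilon_\nu$, this off-by-one does not affect the stated constant, and the proof is otherwise sound.
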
 
\begin{proof} 
By Lemma \ref{lem:joint-drifts} \eqref{eq:simple-drift}
and Proposition \ref{prop:comparison},
\[
    \E_{x,x',0}\bigg[\sum_{n=0}^{\tau-1} \epsilon_b
    (\phi\circ \bar{V})(X_n,X_n')\bigg]
    \le \bar{V}(x,x') + 
    \bar{b} \E_{x,x',0}\bigg[\sum_{n=0}^{\tau-1}
    \charfun{(X_n,X'_n)\in\bar{C}}\bigg],
\]
and we can write
\[
    \E_{x,x',0}\bigg[\sum_{n=0}^{\tau-1}
    \charfun{(X_n,X'_n)\in\bar{C}}\bigg]
    = \sum_{j=1}^\infty \P_{x,x',0}(\tau> T_j)
    = \sum_{j=1}^\infty (1-\epsilon_\nu)^{j-1},
\]
because $\P_{x,x',0}(\tau>
T_j)=\P_{x,x',0}(D_{T_1+1}=0,\ldots,D_{T_{j-1}+1}=0)$
and $\P_{x,x',0}(D_{T_i+1}=0\mid 
D_{T_{1}+1}=0,\ldots,D_{T_{i-1}+1}=0)=1-\epsilon_\nu$.
\end{proof} 

\begin{lemma}
    \label{lem:r-first-tours} 
Assume Condition \ref{cond:drift-for-coupling}, let $r$ be defined in 
\eqref{eq:def-r-n} and consider the coupling construction in
Definition \ref{def:coupling-kernel}.
Then, 
\[
    {\E}_{x,x',0}\bigg[\sum_{n=0}^{T_1} r(n)\bigg]
    \le 1 + \frac{r(1)}{\epsilon_b\phi(1)}\big(
       \bar{V}(x,x')-1
    \big)
    \charfun{(x,x)\notin\bar{C}}
    \label{eq:r-n-first}
\]
\end{lemma}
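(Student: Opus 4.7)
Plan.  If $(x,x')\in\bar{C}$, then $T_1=0$ and $\sum_{n=0}^{T_1}r(n)=r(0)=1$, which matches the right-hand side since the indicator vanishes; the bound holds with equality.  Assume therefore $(x,x')\notin\bar{C}$, so that $T_1\ge 1$ and at each time $n\in\{0,\dots,T_1-1\}$ the pair $(X_n,X_n')$ lies outside $\bar{C}$.  Lemma \ref{lem:joint-drifts}\eqref{eq:joint-drift-coupling1} then gives the pre-$T_1$ drift
\[
\bar{P}_k\bar{V}(x,x',0)\le\bar{V}(x,x')-\epsilon_b\,\phi\circ\bar{V}(x,x').
\]

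The core of the plan is to apply Proposition \ref{prop:comparison} to the hitting time $T_1$ of $\bar{C}$ with a Lyapunov function tuned to the rate $r$.  Taking $W(x,x')\defeq\frac{r(1)}{\epsilon_b\phi(1)}\bigl(\bar{V}(x,x')-1\bigr)$ and using the elementary bound $\phi\circ\bar{V}\ge\phi(1)$ (valid because $\bar{V}\ge 1$ and $\phi$ is non-decreasing), the drift delivers, for $(x,x')\notin\bar{C}$,
\[
\bar{P}_k W(x,x',0)-W(x,x')\le -\frac{r(1)}{\phi(1)}\phi\circ\bar{V}(x,x'),
\]
whence Proposition \ref{prop:comparison} yields the auxiliary bound
\[
\E_{x,x',0}\Big[\sum_{n=0}^{T_1-1}\frac{r(1)}{\phi(1)}\phi\circ\bar{V}(X_n,X_n')\Big]\le W(x,x')=\frac{r(1)}{\epsilon_b\phi(1)}\bigl(\bar{V}(x,x')-1\bigr).
\]

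It remains to convert this $\phi\circ\bar{V}$-weighted sum into $\sum_{n=1}^{T_1}r(n)=\sum_{n=0}^{T_1-1}r(n+1)$; once this is done, restoring the $r(0)=1$ contribution on the left matches the additive $1$ on the right-hand side of the claim.  For this conversion I would combine the log-concavity bound $r(n+1)\le r(1)r(n)$ of Lemma \ref{lem:log-concavity}\eqref{eq:log-concavity} with the supermartingale $H_\phi(\bar{V}(X_n,X_n'))+\epsilon_b n$ (a consequence of the drift and the concavity of $H_\phi$, via $H_\phi'=1/\phi$), so as to compare the $r$-sum with the $\phi\circ\bar V$-sum in expectation, exploiting the defining identity $r(n)\phi(1)=\phi\circ H_\phi^{-1}(\epsilon_b n)$.

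The main obstacle is precisely this last conversion step.  The pointwise inequality $r(n+1)\le(r(1)/\phi(1))\phi\circ\bar{V}(X_n,X_n')$ cannot hold along every trajectory: $r(n+1)$ grows with $n$, whereas $\bar{V}(X_n,X_n')$ may already have decreased towards $1$ long before $T_1$.  The comparison must therefore be carried out in expectation, through the averaging effect of the drift-induced supermartingale and the log-concavity of $r$; balancing these two ingredients so that the sharp prefactor $r(1)/(\epsilon_b\phi(1))$ emerges in front of $\bar{V}(x,x')-1$ is the delicate technical point of the argument.
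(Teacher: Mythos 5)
There is a genuine gap, and it sits exactly where you place it: the conversion of the occupation bound $\E_{x,x',0}\big[\sum_{n=0}^{T_1-1}\phi\circ\bar V(X_n,X_n')\big]\le \epsilon_b^{-1}\big(\bar V(x,x')-1\big)$ into a bound on $\E_{x,x',0}\big[\sum_{n=1}^{T_1}r(n)\big]$ is the entire content of the lemma, and your proposal only asserts that it can be done "in expectation" by combining the supermartingale $H_\phi(\bar V(X_n,X_n'))+\epsilon_b n$ with $r(n+1)\le r(1)r(n)$. These two tools do not suffice as stated: to dominate the \emph{growing} weight $r(n+1)=\phi\circ H_\phi^{-1}(\epsilon_b(n+1))/\phi(1)$ by $\tfrac{r(1)}{\phi(1)}\phi\circ\bar V(X_n,X_n')$ you would need, on $\{n<T_1\}$, a \emph{lower} bound of the type $\bar V(X_n,X_n')\gtrsim H_\phi^{-1}(\epsilon_b n)$ (pointwise or after averaging), whereas the supermartingale property of $H_\phi(\bar V)+\epsilon_b n$ only yields an \emph{upper} bound on $\E[H_\phi(\bar V(X_{n\wedge T_1},X'_{n\wedge T_1}))]$; the drift pushes $\bar V$ down before $T_1$, so the inequality you need goes against the information the supermartingale provides. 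Your first step (the drift for $W=\tfrac{r(1)}{\epsilon_b\phi(1)}(\bar V-1)$ and Proposition \ref{prop:comparison}) is correct but only reproduces, up to a constant, the bound of Lemma \ref{lem:phi-v} restricted to $T_1$, which carries no information about the time-inhomogeneous weights $r(n)$.

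The mechanism that actually delivers the weights is the time-dependent Lyapunov sequence of Proposition \ref{prop:drift-sequence}: with $\varphi=\epsilon_b\phi$ and $b=\bar b$ (justified by Lemma \ref{lem:joint-drifts}\eqref{eq:simple-drift}, and noting $r_\varphi(n)=r(n)$), the functions $V_k=H_k\circ\bar V$ satisfy
\begin{equation*}
\bar P_k V_{k+1}(x,x',0)\le V_k(x,x')-\epsilon_b\phi(1)\,r(k)+\bar b\,r(k+1)\charfun{(x,x')\in\bar C},
\end{equation*}
so that Proposition \ref{prop:comparison} applied to $Z_k=V_k(X_k,X_k')$ and the stopping time $T_1$ (the indicator term vanishes before $T_1$) gives directly
\begin{equation*}
\E_{x,x',0}\bigg[\sum_{n=0}^{T_1-1}\epsilon_b\phi(1)\,r(n)\bigg]\le H_0\circ\bar V(x,x')=\bar V(x,x')-1,
\end{equation*}
after which $r(n+1)\le r(1)r(n)$ and the additive $r(0)=1$ finish the proof, as in the paper. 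In effect, the "delicate technical point" you defer is precisely the content of Proposition \ref{prop:drift-sequence}; either invoke it (it is restated in Appendix \ref{sec:literature}) or reconstruct its proof, since without it your argument does not close.
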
 
\begin{proof} 
The claim holds trivially for $(x,x')\in\bar{C}$, so assume
$(x,x')\in\bar{C}$.
Lemma \ref{lem:joint-drifts} \eqref{eq:simple-drift} allows us to apply
Proposition \ref{prop:drift-sequence} with $\varphi =
\epsilon_b \phi$ and $b=\bar{b}$; note that $r_\varphi(n) = r(n)$. Then,
Proposition \ref{prop:comparison} with
$Z_k \defeq (H_k\circ\bar{V})(X_k,X_k')$ 
yields 
\[
    \E_{x,x',0}\bigg[\sum_{n=0}^{T_1-1} \epsilon_b \phi(1) r(n) 
    \bigg]
    \le H_0 \circ \bar{V}(x,x')
    =
    \bar{V}(x,x')-1.
\]
Lemma \ref{lem:log-concavity} 
\eqref{eq:log-concavity} implies
$r(n+1)\le r(1)r(n)$, so we deduce
\[
    {\E}_{x,x',0}\bigg[\sum_{n=0}^{T_1} r(n)\bigg]
    = 1 + {\E}_{x,x',0}\bigg[\sum_{n=1}^{T_1} r(n)\bigg]
    \le 1 + r(1) {\E}_{x,x',0}\bigg[
       \sum_{n=0}^{T_1-1} r(n)
     \bigg].
    \qedhere
\]
\end{proof} 

\begin{lemma}
    \label{lem:r-phi-sum} 
Assume Condition \ref{cond:drift-for-coupling}, 
let $r$ be defined in 
\eqref{eq:def-r-n} and consider the coupling construction in
Definition \ref{def:coupling-kernel}.
Then, 
\begin{align*}
    {\E}_{x,x',0}\bigg[\sum_{n=0}^{\tau-1} r(n)\bigg]
    \le 
    \frac{1}{\epsilon_b \phi(1)}\bigg[\bigg(1+\frac{c_* \bar{b}
      r^2(1)}{\epsilon_b \phi(1)} \bigg) \bar{V}(x,x')
    + \bigg( c_* \bar{b} r(1) -1 
    - \frac{c_* \bar{b} r^2(1)}{\epsilon_b \phi(1)}\bigg).
\end{align*}
where $\bar{b}$ and $c_*$ are given in Theorem \ref{thm:generic-explicit-rates}.
\end{lemma}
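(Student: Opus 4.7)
The plan is to decompose the sum over successive visits of the coupled chain to $\bar{C}$, control each excursion via the strong Markov property and Lemma \ref{lem:r-first-tours}, and iterate to obtain the geometric-type series defining $c_*$.

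Let $J \defeq \inf\{j \ge 1 : D_{T_j+1} = 1\}$, so that $\tau = T_J + 1$, and write
\[
\sum_{n=0}^{\tau-1} r(n) = \sum_{n=0}^{T_1} r(n) + \sum_{j=1}^{\infty} \charfun{J > j}\, S_j, \quad\text{where}\quad S_j \defeq \sum_{n=T_j+1}^{T_{j+1}} r(n).
\]
Lemma \ref{lem:r-first-tours} controls the first term directly. For each tour with $j \ge 1$, I would factor by log-concavity using \eqref{eq:log-concavity}, obtaining $S_j \le r(T_j+1)\sum_{k=0}^{T_{j+1}-T_j-1} r(k)$. The strong Markov property applied at time $T_j+1$, combined with Lemma \ref{lem:r-first-tours} at the fresh starting point, bounds the residual sum by $1 + \frac{r(1)}{\epsilon_b \phi(1)}(\bar{V}(X_{T_j+1},X'_{T_j+1}) - 1)$. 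A further conditional expectation at $\mathcal{F}_{T_j}$ uses two facts: the non-coupling probability at $T_j$ is $1 - \epsilon_\nu$, and conditional on non-coupling the distribution of $(X_{T_j+1},X'_{T_j+1})$ is $\check{P}_{T_j+1}(X_{T_j},X'_{T_j};\cdot)$, with $\check{P}\bar{V}$ bounded on $\bar{C}$ by \eqref{eq:joint-drift2}. Combined, these produce the constant $M_1$ of the theorem and yield
\[
\E_{x,x',0}[\charfun{J > j}\, S_j \mid \mathcal{F}_{T_j}] \le \frac{(1-\epsilon_\nu) M_1}{r(1)}\, \charfun{J > j-1}\, r(T_j+1).
\]

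Next I would iterate a bound on $\tilde{\gamma}_j \defeq \E_{x,x',0}[r(T_j+1)\charfun{J > j-1}]$. Lemma \ref{lem:log-concavity}\eqref{eq:diff-est} applied with $n = T_{j-1}+1$ yields $r(T_j+1) \le r(T_{j-1}+1)\big(1 + \delta_{T_{j-1}+1}\sum_{k=1}^{T_j - T_{j-1}} r(k)\big)$, and an argument parallel to the previous one---Lemma \ref{lem:r-first-tours} on the fresh excursion, the $(1-\epsilon_\nu)$ factor from the failed coupling, and \eqref{eq:joint-drift2}---together with the monotonicity substitution $\delta_{T_{j-1}+1} \le \delta_{j-1}$ (justified because $\delta_k$ is non-increasing in $k$ by concavity of $\phi$ and $T_{j-1}+1 \ge j-1$) produces the recursion $\tilde{\gamma}_j \le (1-\epsilon_\nu)(1+\delta_{j-1} M_1)\,\tilde{\gamma}_{j-1}$. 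Iterating gives $\tilde{\gamma}_j \le (1-\epsilon_\nu)^{j-1}\prod_{k=1}^{j-1}(1+\delta_k M_1)\, \tilde{\gamma}_1$, with the base case $\tilde{\gamma}_1 = \E_{x,x',0}[r(T_1+1)]$ bounded via \eqref{eq:log-concavity} and Lemma \ref{lem:r-first-tours}. Summing the geometric-type series $\sum_{j \ge 1}(1-\epsilon_\nu)^j \prod_{k=1}^{j-1}(1+\delta_k M_1) = (1-\epsilon_\nu)c_*$ and simplifying algebraically using $\bar{b} = 2 b_V + \epsilon_b \phi(1)$ then yields the stated bound.

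The main obstacle is the bookkeeping of the indicators $\charfun{J > j}$ and $\charfun{J > j-1}$ against the nested conditional expectations at $\mathcal{F}_{T_j}$ and $\mathcal{F}_{T_{j-1}}$, since the coupling outcome at $T_{j-1}$ is revealed only at time $T_{j-1}+1$ and determines whether the subsequent dynamics follow $\check{P}$ or the already-coupled kernel. Ensuring that the constant $M_1$ (rather than a coarser upper bound) emerges cleanly from each iteration requires applying Lemma \ref{lem:r-first-tours} to the fresh excursion in conjunction with \eqref{eq:joint-drift2} at the precise moment when conditioning at $\mathcal{F}_{T_{j-1}}$ produces the $(1-\epsilon_\nu)$ factor.
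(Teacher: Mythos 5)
Your excursion-by-excursion decomposition is internally coherent, and the machinery you set up for the recursion on $\tilde{\gamma}_j \defeq \E_{x,x',0}[r(T_j+1)\charfun{J>j-1}]$ (Lemma \ref{lem:log-concavity}\eqref{eq:diff-est}, the factor $1-\epsilon_\nu$ at each failed coupling, Lemma \ref{lem:r-first-tours} applied to the time-shifted kernels, and Lemma \ref{lem:joint-drifts}\eqref{eq:joint-drift2}, producing $M_1$) is exactly what is needed for the quantity $\sum_j \E_{x,x',0}[r(T_j+1)\charfun{\tau>T_j}]$. The genuine gap is in how you treat the excursion sums $S_j$ themselves: bounding $\E_{x,x',0}[\charfun{J>j}S_j\mid\F_{T_j}]$ by restarting Lemma \ref{lem:r-first-tours} at the fresh point and paying $\E[\bar{V}(X_{T_j+1},X'_{T_j+1})]$ through \eqref{eq:joint-drift2} charges each tour $(1-\epsilon_\nu)M_1 r(T_j+1)/r(1)$, and your first term is $1+\tfrac{r(1)}{\epsilon_b\phi(1)}(\bar{V}(x,x')-1)$. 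Carrying your plan through therefore yields $\big(1+(1-\epsilon_\nu)M_1c_*\big)\big(1+\tfrac{r(1)}{\epsilon_b\phi(1)}(\bar{V}(x,x')-1)\big)$, which does \emph{not} simplify algebraically to the stated bound: the stated bound is $\tfrac{1}{\epsilon_b\phi(1)}\big[\bar{V}(x,x')-1+c_*\bar{b}r(1)\big(1+\tfrac{r(1)}{\epsilon_b\phi(1)}(\bar{V}(x,x')-1)\big)\big]$, whose $c_*$-term carries the weight $\bar{b}r(1)/(\epsilon_b\phi(1))$ per visit, whereas yours carries $(1-\epsilon_\nu)M_1$; the ratio of these weights grows without bound in $c_V$ (and in $r(1)$), since $\bar{b}=2b_V+\epsilon_b\phi(1)$ contains no $c_V$ while $M_1$ does. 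So your last step ("simplifying algebraically \ldots yields the stated bound") fails, and no domination argument rescues it: your bound is generically strictly weaker than the lemma's, which matters because these constants feed directly into $c$ in Theorem \ref{thm:generic-explicit-rates}.

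The missing idea is to avoid decomposing $\sum_{n=0}^{\tau-1}r(n)$ by excursions altogether. The paper applies Proposition \ref{prop:drift-sequence} (with $\varphi=\epsilon_b\phi$, $b=\bar{b}$, via Lemma \ref{lem:joint-drifts}\eqref{eq:simple-drift}) together with the comparison result, Proposition \ref{prop:comparison}, \emph{once, globally over} $[0,\tau)$. This gives $\epsilon_b\phi(1)\,\E_{x,x',0}\big[\sum_{n=0}^{\tau-1}r(n)\big]\le \bar{V}(x,x')-1+\bar{b}\,\E_{x,x',0}\big[\sum_{n=0}^{\tau-1}r(n+1)\charfun{(X_n,X_n')\in\bar{C}}\big]$: the supermartingale telescoping cancels the $\bar{V}$-values accumulated along each tour, so a visit to $\bar{C}$ costs only $\bar{b}\,r(T_j+1)$, and only then is your recursion (with $M_1$ and $c_*$) invoked on the remaining sum over visits. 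Your per-tour restart discards this telescoping---each restart pays the post-$\bar{C}$ value of $\bar{V}$ through \eqref{eq:joint-drift2} instead of crediting the drift decrease earned during the tour---and that is precisely where the discrepancy in constants originates. (A minor additional point: when you invoke Lemma \ref{lem:r-first-tours} after time $T_j+1$ you should note, as the paper does, that it applies to the chain driven by the shifted kernels $(P_{k+T_j+1})_{k\ge1}$ because Condition \ref{cond:drift-for-coupling} holds uniformly in $k$.)
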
 
\begin{proof} 
Lemma \ref{lem:joint-drifts} \eqref{eq:simple-drift} and
Propositions \ref{prop:comparison} and
\ref{prop:drift-sequence} applied 
as in the proof of Lemma \ref{lem:r-first-tours} yield
\[
    {\E}_{x,x',0}\bigg[\sum_{n=0}^{\tau - 1}
    \epsilon_b \phi(1)r(n)\bigg]
    \le \bar{V}(x,x')-1
    + \bar{b}{\E}_{x,x',0}\bigg[\sum_{n=0}^{\tau-1} r(n+1)
    \charfun{(X_n,X_n')\in\bar{C}}\bigg].
\]
The latter expectation can be written as
\begin{align*}
    {\E}_{x,x',0}\bigg[\sum_{n=0}^{\tau-1} 
    r(n+1)
    \charfun{(X_n,X_n')\in\bar{C}}\bigg]
    &= \sum_{j=1}^\infty{\E}_{x,x',0}\big[
    r(T_j+1) \chi_j
    \big],
\end{align*}
where $\chi_{j} \defeq \charfun{\tau > T_{j}} 
= \chi_{j-1} \charfun{D_{T_{j}+1}=0}$
for $j\ge 1$ and $\chi_0\equiv 1$.

Lemma \ref{lem:log-concavity} \eqref{eq:diff-est} 
implies for $j\ge 1$,
\begin{align*}
    r(T_{j+1}+1) \le
    r(T_j+1)\bigg(1+\epsilon_b(\phi'\circ
    H_\phi^{-1})\big(\epsilon_b(T_j+1)\big)\sum_{k=1}^{T_{j+1}-T_j} r(k)\bigg),
\end{align*}
and $\epsilon_b(\phi'\circ H_\phi^{-1})\big(\epsilon_b(T_j+1)\big)
\le \epsilon_b(\phi'\circ H_\phi^{-1})(\epsilon_b j) =
\delta_j$, because $T_j+1\ge j$ and $\phi'\circ H_\phi^{-1}$ is
non-increasing. 
We next show that, taking conditional expectation with respect to
$\F_{T_j} = \sigma\big((X_n,X_n',D_n): 1\le n\le T_j\big)$, we get
for $j\ge 1$
\begin{align}
    \E_{x,x',0}\big[&r(T_{j+1}+1) \chi_{j+1}\big] 
    \le {\E}_{x,x',0}\big[r(T_{j}+1)\chi_{j}\big]
    (1+
    \delta_j M_1)(1-\epsilon_\nu),
    \label{eq:cond-exp-stuff}
\end{align}
where $M_1$ is given below.
Namely, $\P_{x,x',0}(D_{T_j+1}=0\mid\F_{T_j})=(1-\epsilon_\nu)$ and
\begin{align*}
   \E_{x,x',0}\bigg[
    \sum_{k=1}^{T_{j+1}-T_{j}}
    r(k)\;\bigg|\;&\F_{T_j},D_{T_j+1}=0\bigg] \\
    &\le 
    \sup_{k\ge 1}
    \sup_{(x,x')\in\bar{C}}
    \int
    \check{P}_k(x,x';\ud y,\ud y') 
    \E_{y,y',0}^{(T_j)}\bigg[
      \sum_{n=0}^{T_1^{(T_j)}}
      r(n+1)\bigg],
\end{align*}
where $\E_{x,x',0}^{(j)}$ stands for the expectation
over the stopping time $T_1^{(j)}$ 
corresponding to the Markov chain $(X_n^{(j)},X_n'^{(j)},D_n^{(j)})$
constructed as in Definition \ref{def:coupling-kernel}
but defined using $(P_{k+j})_{k\ge 1}$ 
instead of $(P_k)_{k\ge 1}$. Lemma \ref{lem:r-first-tours} still
applies for this expectation, because it assumes only that
the kernels satisfy Condition 
\ref{cond:drift-for-coupling}.
Therefore Lemma \ref{lem:joint-drifts} \eqref{eq:joint-drift2} and
the bound $r(k+1)\le r(k)r(1)$ by Lemma
\ref{lem:log-concavity} \eqref{eq:log-concavity} yield
\begin{align*}
       \E_{x,x',0}\bigg[
    \sum_{k=1}^{T_{j+1}-T_{j}}
    r(k)\;\bigg|\;&\F_{T_j},D_{T_j+1}=0\bigg] 
    &\le r(1)\bigg[1 + \frac{2r(1)}{\epsilon_b \phi(1)} 
    \bigg(\frac{b_V+c_V}{1-\epsilon_\nu} -1\bigg)\bigg]
    =M_1.
\end{align*}

Applying \eqref{eq:cond-exp-stuff} recursively for $j+1,\ldots,2$ yields
\[
    \E_{x,x',0}\big[r(T_{j+1}+1) \chi_{j+1}\big] 
    \le (1-\epsilon_\nu)^{j-1}\prod_{i=1}^{j-1} (1+\delta_i M_1)
    {\E}_{x,x',0}\big[
    r_{\phi}(T_1+1) \big],
\]
and Lemma \ref{lem:r-first-tours} together with 
Lemma \ref{lem:log-concavity} \eqref{eq:log-concavity}
give
\[
    {\E}_{x,x',0}\big[
    r(T_1+1) \big]
    \le r(1)\bigg(1 + \frac{r(1)}{\epsilon_b\phi(1)}\big(
       \bar{V}(x,x')-1\big)\bigg).
\]
Putting everything together,
\[
{\E}_{x,x',0}\bigg[\sum_{n=0}^{\tau-1} r(n)\bigg]
    \le 
    \frac{1}{\epsilon_b\phi(1)}\bigg[\bar{V}(x,x')-1
    +
    c_*
    \bar{b}r(1)\bigg(1+\frac{r(1)}{\epsilon_b\phi(1)}
    \big(\bar{V}(x,x')-1\big)\bigg)\bigg],
\]
which equals the desired bound.
\end{proof} 


\section*{Acknowledgements} 

The work of C.\ Andrieu was partially supported by a Winton Capital research award.
M.\ Vihola was supported by the Academy of Finland
(project 250575).




\appendix

\section{Some results in the literature} 
\label{sec:literature} 

We restate here some results in the literature for the reader's
convenience. We start by stating
\cite[Proposition 11.3.2]{meyn-tweedie} for inhomogeneous
Markov chains; the proof of \cite{meyn-tweedie} applies without
modifications.
\begin{proposition}
    \label{prop:comparison} 
Suppose $(X_n)_{n\ge 0}$ is a Markov chain and let $\F_n \defeq
\sigma(X_0,\ldots,X_n)$ for $n\ge 0$. Assume $Z_n$ is non-negative and
$\F_n$-adapted, $f_n$ and $s_n$ are non-negative measurable functions and
\[
    \E[Z_{n+1} \mid \F_n] \le Z_n - f_n(X_n) + s_n(X_n)
    \qquad\text{for all $n\ge 0$.}
\]
Then, for any initial condition $x$ and any stopping time $\tau$
\[
    \E_x\bigg[\sum_{k=0}^{\tau-1} f_k(X_k) \bigg] \le Z_0(x) +
    \E_x\bigg[\sum_{k=0}^{\tau-1} s_k(X_k) \bigg].
\]
\end{proposition}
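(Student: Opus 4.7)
The plan is to reduce the claim to a standard optional-stopping argument for a supermartingale built from the shifted process $Z_n$, using the hypothesis only to guarantee the supermartingale property.

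First, I would define the auxiliary process
\[
M_n \defeq Z_n + \sum_{k=0}^{n-1}\big(f_k(X_k) - s_k(X_k)\big),
\]
with $M_0 \defeq Z_0$, and verify that $M_n$ is $\F_n$-adapted and integrable at each finite $n$ (we can assume without loss of generality that $Z_n$ is integrable for each $n$, since the hypothesis already constrains $\E[Z_{n+1}\mid \F_n]$). From the assumed drift
\[
\E[Z_{n+1}\mid \F_n] \le Z_n - f_n(X_n) + s_n(X_n),
\]
we get directly $\E[M_{n+1}-M_n \mid \F_n] \le 0$, so $(M_n)_{n\ge 0}$ is an $(\F_n)$-supermartingale under $\P_x$.

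Next, for fixed $N\ge 0$ I would apply the optional sampling theorem to the bounded stopping time $\tau\wedge N$, giving
\[
\E_x[M_{\tau\wedge N}] \le \E_x[M_0] = Z_0(x).
\]
Rearranging and using $Z_{\tau\wedge N}\ge 0$ yields
\[
\E_x\bigg[\sum_{k=0}^{\tau\wedge N-1} f_k(X_k)\bigg]
\le Z_0(x) + \E_x\bigg[\sum_{k=0}^{\tau\wedge N-1} s_k(X_k)\bigg].
\]
Finally, since $f_k$ and $s_k$ are non-negative, the partial sums on both sides are monotone non-decreasing in $N$; letting $N\to\infty$ and applying the monotone convergence theorem on each side gives the stated conclusion (allowing both sides to be $+\infty$).

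The main obstacle is just bookkeeping around integrability and $\{\tau=\infty\}$. Specifically, I need the nonnegativity of $f_k$, $s_k$ and $Z_n$ to justify both discarding $\E_x[Z_{\tau\wedge N}]$ from the left and invoking monotone convergence as $N\to\infty$; the sign assumptions in the hypothesis are exactly what make this work without any further moment or finiteness conditions on $\tau$.
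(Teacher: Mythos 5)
Your overall strategy is the right one, and it is essentially the argument behind the result the paper merely quotes (the paper gives no proof of Proposition \ref{prop:comparison}; it invokes \cite[Proposition 11.3.2]{meyn-tweedie}, whose proof is exactly a stopped-process/telescoping version of what you propose). The one genuine weak point is your integrability step. The claim that you may assume ``without loss of generality that $Z_n$ is integrable, since the hypothesis already constrains $\E[Z_{n+1}\mid\F_n]$'' is not justified: conditional expectations of non-negative random variables are always well defined, so the drift inequality carries no integrability information whatsoever. Moreover, your $M_n$ also contains the \emph{unstopped} sums $\sum_{k<n} f_k(X_k)$ and $\sum_{k<n} s_k(X_k)$, and even reducing to the only non-trivial case where $Z_0(x)<\infty$ and $\E_x\big[\sum_{k=0}^{\tau-1}s_k(X_k)\big]<\infty$ does not make $s_k(X_k)$ or $Z_n$ integrable, because that finiteness only controls the sums stopped at $\tau$. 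Without $M_n\in L^1$, ``$(M_n)$ is a supermartingale'' and the appeal to optional sampling are not available in the classical sense, so as written the argument has a hole.

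The standard repair keeps your idea but stops before taking expectations: multiply the one-step inequality by $\charfun{\tau>n}\in\F_n$ (or, equivalently, work with $M_{n\wedge\tau}$), which gives $\E\big[Z_{(n+1)\wedge\tau}\mid\F_n\big]\le Z_{n\wedge\tau}-f_n(X_n)\charfun{\tau>n}+s_n(X_n)\charfun{\tau>n}$. After discarding the trivial cases $Z_0(x)=\infty$ or $\E_x\big[\sum_{k=0}^{\tau-1}s_k(X_k)\big]=\infty$, an induction in $n$ shows all stopped quantities have finite expectation, so one may take expectations, telescope over $n=0,\dots,N-1$, drop $\E_x[Z_{N\wedge\tau}]\ge 0$, and let $N\to\infty$ by monotone convergence exactly as you do. With that modification your proof is complete and coincides with the intended (Meyn--Tweedie) argument; note also that in the paper's applications (Lemmas \ref{lem:phi-v}--\ref{lem:r-phi-sum}) the functions $f_n,s_n$ are bounded, but the proposition is stated without such an assumption, so the stopped formulation is genuinely needed.
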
 

\begin{proposition}[\protect{\cite[Proposition
        2.1]{douc-fort-moulines-soulier}}] 
    \label{prop:drift-sequence} 
Assume $P$ is a Markov kernel satisfying
\[
    P V(x) \le V(x) - \varphi\circ V(x) + b \charfun{x\in C},
\]
where $\varphi:[1,\infty)\to(0,\infty)$ is a non-decreasing convex function.
Then for 
$r_\varphi(k) \defeq (\varphi \circ H_\varphi^{-1})(n)/\varphi(1)$ where
$H_\varphi$ 
is as defined in \eqref{eq:def-r-n},
\[
    P V_{k+1}(x) \le V_k(x) - \varphi(1)r_\varphi(k) 
    + b r_\varphi(k+1) \charfun{x\in C}\qquad\text{for all $k\ge 0$},
\]
where $V_k\defeq H_k \circ V$ and 
\[
    H_k(v) \defeq \varphi(1)\int_0^{H_\varphi(v)}
r_\varphi(z+k)\ud z
= H_\varphi^{-1}(H_\varphi(v) + k)
-H_\varphi^{-1}(k).
\]
\end{proposition}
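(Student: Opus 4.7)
The plan is to combine Jensen's inequality applied to the concave function $H_{k+1}$ with a pointwise estimate that controls the behaviour of $H_k$ under the one-step dynamics $v \mapsto v - \varphi(v)$ dictated by the drift.

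First I would verify that, for each $k\ge 0$, the function $H_k$ is non-decreasing, differentiable and concave on $[1,\infty)$. Starting from the closed form $H_k(v) = H_\varphi^{-1}(H_\varphi(v) + k) - H_\varphi^{-1}(k)$ and the identities $H_\varphi'(v) = 1/\varphi(v)$, $(H_\varphi^{-1})'(t) = \varphi(H_\varphi^{-1}(t))$, a direct computation gives $H_k'(v) = \varphi(g_k(v))/\varphi(v) > 0$ with $g_k(v) \defeq H_\varphi^{-1}(H_\varphi(v)+k) \ge v$. Differentiating once more and using the monotonicity of $\varphi'$ (concavity of $\varphi$) delivers $H_k''(v) \le 0$.

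Next, Jensen's inequality applied to the concave $H_{k+1}$ together with the drift assumption and monotonicity of $H_{k+1}$ yields
\[
PV_{k+1}(x) \le H_{k+1}(PV(x)) \le H_{k+1}\big(V(x) - \varphi\circ V(x) + b\charfun{x\in C}\big).
\]
To isolate the small-set contribution, I would apply the tangent-line inequality $H_{k+1}(a+h) \le H_{k+1}(a) + H_{k+1}'(a) h$ (valid for $h \ge 0$ by concavity of $H_{k+1}$) with $a = V(x) - \varphi(V(x))$ and $h = b\charfun{x\in C}$, and then bound $H_{k+1}'(a) \le H_{k+1}'(1) = \varphi(H_\varphi^{-1}(k+1))/\varphi(1) = r_\varphi(k+1)$ since $H_{k+1}'$ is non-increasing. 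This produces precisely the term $b r_\varphi(k+1)\charfun{x \in C}$ in the claim.

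It then remains to establish the pointwise inequality $H_{k+1}(v - \varphi(v)) \le H_k(v) - \varphi(1) r_\varphi(k)$ for all $v \ge 1$, which is the heart of the proof. The crucial observation is that
\[
H_\varphi(v) - H_\varphi(v - \varphi(v)) = \int_{v - \varphi(v)}^{v} \frac{\ud s}{\varphi(s)} \ge 1,
\]
because $\varphi(s) \le \varphi(v)$ on this interval by monotonicity of $\varphi$. Therefore $H_\varphi(v-\varphi(v)) + (k+1) \le H_\varphi(v) + k$, and monotonicity of $H_\varphi^{-1}$ gives $H_\varphi^{-1}(H_\varphi(v-\varphi(v))+k+1) \le H_\varphi^{-1}(H_\varphi(v)+k)$. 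Subtracting and using
\[
H_\varphi^{-1}(k+1) - H_\varphi^{-1}(k) = \int_k^{k+1} \varphi(H_\varphi^{-1}(s))\,\ud s \ge \varphi(H_\varphi^{-1}(k)) = \varphi(1) r_\varphi(k)
\]
closes the estimate. The main obstacle is precisely this step; once the integral inequality $H_\varphi(v) - H_\varphi(v - \varphi(v)) \ge 1$ is noticed, the rest is bookkeeping with $H_\varphi^{-1}$. A minor technicality arises when $v - \varphi(v) < 1$ lies outside the domain of $H_\varphi$; this can be handled either by anchoring the tangent-line estimate at $V(x)$ rather than at $V(x) - \varphi(V(x))$ (using $PV(x) \ge 1$), or by extending $H_{k+1}$ continuously to $[0,\infty)$ so that the same manipulations remain valid.
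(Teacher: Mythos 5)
You should first note that the paper itself contains no proof of this proposition: it is restated in Appendix \ref{sec:literature} from \cite[Proposition 2.1]{douc-fort-moulines-soulier}, so the comparison is with that source. Your argument is essentially a reconstruction of that proof: monotonicity and concavity of $H_k$, Jensen's inequality, a tangent-line bound for the $b$-term, and the two integral comparisons $H_\varphi(v)-H_\varphi(v-\varphi(v))=\int_{v-\varphi(v)}^v \varphi(s)^{-1}\ud s\ge 1$ and $H_\varphi^{-1}(k+1)-H_\varphi^{-1}(k)\ge\varphi(H_\varphi^{-1}(k))=\varphi(1)r_\varphi(k)$. These computations are all correct, including $H_k'(v)=\varphi(g_k(v))/\varphi(v)$ and $H_{k+1}'(1)=r_\varphi(k+1)$. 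You also correctly read the hypothesis as \emph{concave}: the word ``convex'' in the statement is a slip (compare Condition \ref{cond:drift-for-coupling} and the application with $\varphi=\epsilon_b\phi$); with a genuinely convex $\varphi$ the Jensen step would go the wrong way, so make this reading explicit.

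The one point that needs more than a closing remark is the domain issue you flag. For $x\notin C$ it does not arise, since the drift and $V\ge1$ force $V(x)-\varphi(V(x))\ge PV(x)\ge1$; but for $x\in C$ one can indeed have $V(x)-\varphi(V(x))<1$, and of your two proposed remedies only the first is sound, and it should be carried out. Anchoring the tangent of the concave $H_{k+1}$ at $a=V(x)\ge1$ and evaluating it at $y=V(x)-\varphi(V(x))+b\charfun{x\in C}\ge PV(x)\ge1$ gives $H_{k+1}(y)\le H_{k+1}(v)-\varphi(v)H_{k+1}'(v)+b\,H_{k+1}'(v)\charfun{x\in C}$ with $v=V(x)$; the bound $H_{k+1}'(v)\le H_{k+1}'(1)=r_\varphi(k+1)$ handles the $b$-term, and the remaining inequality $H_{k+1}(v)-\varphi(v)H_{k+1}'(v)\le H_k(v)-\varphi(1)r_\varphi(k)$ follows from your own integral comparison: with $A\defeq H_\varphi(v)+k$ it rearranges into $\big(H_\varphi^{-1}(A+1)-H_\varphi^{-1}(A)\big)-\varphi\big(H_\varphi^{-1}(A+1)\big)\le\big(H_\varphi^{-1}(k+1)-H_\varphi^{-1}(k)\big)-\varphi\big(H_\varphi^{-1}(k)\big)$, whose left-hand side is $\le0$ and right-hand side is $\ge0$ because $\varphi\circ H_\varphi^{-1}$ is non-decreasing. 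This single anchoring point in fact covers $x\in C$ and $x\notin C$ simultaneously, so you could dispense with the case split. By contrast, the second remedy (extending $H_{k+1}$ to $[0,\infty)$) is not available in general: you would need $H_{k+1}'$ to stay non-increasing below $1$, hence a positive, non-decreasing, concave extension of $\varphi$ on $[V(x)-\varphi(V(x)),1]$, and such an extension need not exist when $\varphi$ is large on the small set (for instance if $\varphi(v)>v$ there, the tangent extension reaches zero before $V(x)-\varphi(V(x))$). Write out the first fix and drop the second.
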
 



\begin{thebibliography}{10} 

\bibitem{andrieu-fort-explicit}
C.~Andrieu and G.~Fort.
\newblock Explicit control of subgeometric ergodicity.
\newblock Research report 05:17, University of Bristol, 2005.

\bibitem{andrieu-moulines-priouret}
C.~Andrieu, {\'E}.~Moulines, and P.~Priouret.
\newblock Stability of stochastic approximation under verifiable conditions.
\newblock {\em {SIAM} J. Control Optim.}, 44(1):283--312, 2005.

\bibitem{andrieu-roberts}
C.~Andrieu and G.~O. Roberts.
\newblock The pseudo-marginal approach for efficient {M}onte {C}arlo
  computations.
\newblock {\em Ann. Statist.}, 37(2):697--725, 2009.

\bibitem{andrieu-thoms}
C.~Andrieu and J.~Thoms.
\newblock A tutorial on adaptive {MCMC}.
\newblock {\em Statist. Comput.}, 18(4):343--373, 2008.

\bibitem{andrieu-vihola}
C.~Andrieu and M.~Vihola.
\newblock Markovian stochastic approximation with expanding projections.
\newblock {\em Bernoulli}, 20(2):545--585, 2014.

\bibitem{andrieu-vihola-pseudo}
C.~Andrieu and M.~Vihola.
\newblock Convergence properties of pseudo-marginal {M}arkov chain {M}onte
  {C}arlo algorithms.
\newblock {\em Ann. Appl. Probab.}, to appear.
\newblock Preprint: arXiv:1210.1484v2.

\bibitem{atchade-fort}
Y.~Atchad{\'e} and G.~Fort.
\newblock Limit theorems for some adaptive {MCMC} algorithms with subgeometric
  kernels.
\newblock {\em Bernoulli}, 16(1):116--154, 2010.

\bibitem{baxendale}
P.~H. Baxendale.
\newblock Renewal theory and computable convergence rates for geometrically
  ergodic {M}arkov chains.
\newblock {\em Ann. Appl. Probab.}, 15(1A):700--738, 2005.

\bibitem{douc-fort-moulines-soulier}
R.~Douc, G.~Fort, E.~Moulines, and P.~Soulier.
\newblock Practical drift conditions for subgeometric rates of convergence.
\newblock {\em Ann. Appl. Probab.}, 14(3):1353--1377, 2004.

\bibitem{douc-moulines-rosenthal}
R.~Douc, E.~Moulines, and J.~S. Rosenthal.
\newblock Quantitative bounds on convergence of time-inhomogeneous {M}arkov
  chains.
\newblock {\em Ann. Appl. Probab.}, 14(4):1643--1665, 2004.

\bibitem{douc-moulines-soulier}
R.~Douc, E.~Moulines, and P.~Soulier.
\newblock Computable convergence rates for sub-geometric ergodic {M}arkov
  chains.
\newblock {\em Bernoulli}, 13(3):831--848, 2007.

\bibitem{fort-phd}
G.~Fort.
\newblock {\em Contrôle explicite d'ergodicité de chaînes de {M}arkov :
  application à l'analyse de convergence de l'algorithme {M}onte {C}arlo {EM}}.
\newblock PhD thesis, Univ. Paris VI, 2001.

\bibitem{fort-moulines-polynomial}
G.~Fort and E.~Moulines.
\newblock Polynomial ergodicity of {M}arkov transition kernels.
\newblock {\em Stochastic Process. Appl.}, 103(1):57--99, 2003.

\bibitem{fort-moulines-priouret}
G.~Fort, E.~Moulines, and P.~Priouret.
\newblock Convergence of adaptive and interacting {M}arkov chain {M}onte
  {C}arlo algorithms.
\newblock {\em Ann. Statist.}, 39(6):3262--3289, 2011.

\bibitem{jarner-roberts}
S.~F. Jarner and G.~O. Roberts.
\newblock Polynomial convergence rates of {M}arkov chains.
\newblock {\em Ann. Appl. Probab.}, 12(1):224--247, 2002.

\bibitem{maire-douc-olsson}
F.~Maire, R.~Douc, and J.~Olsson.
\newblock Partial ordering of inhomogeneous {M}arkov {C}hains with applications
  to {M}arkov chain {M}onte {C}arlo methods.
\newblock Preprint arXiv:1307.3719v2, 2013.

\bibitem{meyn-tweedie}
S.~Meyn and R.~L. Tweedie.
\newblock {\em Markov Chains and Stochastic Stability}.
\newblock Cambridge University Press, second edition, 2009.

\bibitem{meyn-tweedie-computable}
S.~P. Meyn and R.~L. Tweedie.
\newblock Computable bounds for geometric convergence rates of {M}arkov chains.
\newblock {\em Ann. Appl. Probab.}, 4(4):981--1011, 1994.

\bibitem{saksman-vihola}
E.~Saksman and M.~Vihola.
\newblock On the ergodicity of the adaptive {M}etropolis algorithm on unbounded
  domains.
\newblock {\em Ann. Appl. Probab.}, 20(6):2178--2203, 2010.

\bibitem{tuominen-tweedie}
P.~Tuominen and R.~L. Tweedie.
\newblock Subgeometric rates of convergence of {$f$}-ergodic {M}arkov chains.
\newblock {\em Adv. Appl. Probab.}, 26(3):775--798, 1994.

\end{thebibliography}
\end{document}